\newcommand\org@hypertarget{}
\let\org@hypertarget\hypertarget
\renewcommand\hypertarget[2]{%
  \Hy@raisedlink{\org@hypertarget{#1}{}}#2%
} 
\newtheorem{theorem}{Theorem}[section]
\newtheorem{lemma}[theorem]{Lemma}
\newtheorem{corollary}[theorem]{Corollary}
\newtheorem{proposition}[theorem]{Proposition}
\theoremstyle{definition}
\newtheorem{remark}[theorem]{Remark}
\newcommand{\xysquare}[8]{
\[\xymatrix{
#1 \ar@{#5}[r] \ar@{#6}[d] & #2 \ar@{#7}[d]\\
#3 \ar@{#8}[r] & #4
}\]
}
\newcommand{\al}{\alpha}
\newcommand{\bb}{\mathbb}
\newcommand{\blob}{\bullet}
\newcommand{\comment}[1]{}
\newcommand{\into}{\hookrightarrow}
\newcommand{\isoto}{\stackrel{\simeq}{\to}}
\newcommand{\Isoto}{\stackrel{\simeq}{\longrightarrow}}
\newcommand{\onto}{\twoheadrightarrow}
\newcommand{\op}{\operatorname}
\renewcommand{\phi}{\varphi}
\newcommand{\quis}{\stackrel{\sim}{\to}}
\newcommand{\res}{\overline}
\newcommand{\roi}{\mathcal{O}}
\newcommand{\sub}[1]{{\mbox{\rm \scriptsize #1}}}
\newcommand{\To}{\longrightarrow}
\newcommand{\ul}[1]{\underline{#1}}
\newcommand{\xto}{\xrightarrow}
\newcommand{\THH}{T\!H\!H}
\newcommand{\TR}{T\!R}
\newcommand{\TC}{T\!C}
\renewcommand{\cal}{\mathcal}
\renewcommand{\hat}{\widehat}
\renewcommand{\frak}{\mathfrak}
\newcommand{\indlim}{\varinjlim}
\renewcommand{\tilde}{\widetilde}
\renewcommand{\ker}{\operatorname{Ker}}
\renewcommand{\projlim}{\varprojlim}
\DeclareMathOperator{\Char}{char}
\DeclareMathOperator{\dlog}{dlog}
\DeclareMathOperator{\Fil}{Fil}
\DeclareMathOperator{\Frac}{Frac}
\DeclareMathOperator{\Spec}{Spec}
\newcommand{\KH}{K\!H}
\newcommand{\OO}{\mathcal O}
\newcommand{\p}{\mathfrak p}
\newcommand{\dotimes}{\otimes^{\bb L}}
\newcommand{\xTo}[1]{\stackrel{#1}{\To}}
\begin{document}
\itemsep0pt

\title{$K$-theory of valuation rings}

\author{Shane Kelly and Matthew Morrow}

\date{}

\maketitle

\begin{abstract}
We prove several results showing that the algebraic $K$-theory of valuation rings behave as though such rings were regular Noetherian, in particular an analogue of the Geisser--Levine theorem. We also give some new proofs of known results concerning cdh descent of algebraic $K$-theory.
\end{abstract}

\tableofcontents

\section{Introduction and statements of main results}
Recall that a valuation ring is an integral domain $\roi$ with the property that, given any elements $f,g\in\roi$, either $f\in g\roi$ or $g\in f\roi$. Despite rarely being Noetherian, valuation rings surprisingly often behave like regular Noetherian rings. The theme of this article is to explore the extent to which this is reflected in their algebraic $K$-theory. The main results are largely independent of one another.

The primary new result is the following calculation of the $p$-adic $K$-theory of valuation rings of characteristic $p$, analogous to Geisser--Levine's theorem for regular Noetherian local rings \cite{GeisserLevine2000}:

\begin{theorem}\label{theorem_GL_intro}
Let $\roi$ be a valuation ring containing $\bb F_p$, and $n\ge0$. Then $K_n(\roi)$ is $p$-torsion-free and there is a natural isomorphism $K_n(\roi)/p^r\isoto W_r\Omega^n_{\roi,\sub{log}}$ given by $\dlog[\cdot]$ on symbols.
\end{theorem}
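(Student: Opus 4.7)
The plan is to reduce Theorem~\ref{theorem_GL_intro} to the classical Geisser--Levine theorem for smooth local $\mathbb{F}_p$-algebras via a filtered colimit presentation of $\roi$. The crucial input I would rely on is a Gabber/Temkin-type presentation theorem: any valuation ring $\roi$ containing $\mathbb{F}_p$ can be written as $\roi = \varinjlim_i A_i$, where $\{A_i\}$ is a filtered system of essentially smooth local $\mathbb{F}_p$-subalgebras of $\roi$. Informally, given any finitely generated $\mathbb{F}_p$-subalgebra $B \subset \roi$, the center of the valuation on $\Spec B$ should be resolvable --- by suitable blow-ups, alterations, or N\'eron-type desingularizations --- to produce an essentially smooth local $\mathbb{F}_p$-subalgebra of $\roi$ containing $B$.

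Once such a presentation is secured, the rest is formal. By the Geisser--Levine theorem (together with Kerz's identification of Milnor $K$-theory in degree $n$ of a regular local ring), each $A_i$ satisfies the conclusions of the theorem: $K_n(A_i)$ is $p$-torsion-free and $\dlog[\cdot]\colon K_n(A_i)/p^r \isoto W_r\Omega^n_{A_i,\sub{log}}$. Algebraic $K$-theory commutes with filtered colimits of rings (Quillen), and $W_r\Omega^n_{-,\sub{log}}$ does as well, as it is by definition the image of the symbol map, which is natural in the ring. Passing to $\varinjlim_i$ of the Geisser--Levine isomorphisms then yields the isomorphism for $\roi$, and since $p$-torsion-freeness is preserved under filtered colimits of abelian groups, $K_n(\roi)$ is $p$-torsion-free as well.

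The main obstacle is the presentation step. For valuation rings of finite rank or with sufficiently ``tame'' value groups, local uniformization (Zariski, Temkin) suffices, but general valuation rings --- non-Noetherian, of possibly infinite rank, and with arbitrary residue field --- require more care. I expect the argument to use Temkin's inseparable local uniformization in characteristic $p$ together with a Frobenius-invariance observation: both sides of the desired isomorphism transform controllably under purely inseparable extension, so one may reduce to the smooth case after such an extension, and then descend. This technical presentation step is where the bulk of the novel work will lie; the $K$-theoretic and de Rham--Witt-theoretic content is then essentially a repackaging of the Geisser--Levine theorem.
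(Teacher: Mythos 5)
Your proposed presentation theorem --- that a valuation ring $\roi$ containing $\bb F_p$ is a filtered colimit of essentially smooth local $\bb F_p$-subalgebras --- is, for all practical purposes, local uniformization in characteristic $p$, which is a famous open problem. The paper flags exactly this obstruction: it notes that resolution of singularities in characteristic $p$ \emph{would} imply $\roi$ is ind-smooth, ``but we wish to avoid assuming resolution of singularities.'' Alterations do not rescue the argument, because an alteration produces a generically finite cover by a regular scheme, which replaces $\roi$ by a possibly proper finite extension rather than exhibiting a dominating essentially smooth local subring inside $\roi$ itself. So the step you identify as ``where the bulk of the novel work will lie'' is not merely technical; it is a conjecture.

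Your fallback via Temkin's inseparable local uniformization also fails for this particular statement. Temkin's theorem applies after passing to the perfection $\roi_{\sub{perf}}$, and the paper does use exactly this route in its proof of Gersten injectivity (Theorem \ref{theorem_gersten_intro}). But that argument works because one first inverts $p$: the map $K_n(\roi)[\tfrac1p]\to K_n(\roi_{\sub{perf}})[\tfrac1p]$ is an isomorphism (Remark \ref{remark_perf}), and $p$-torsion-freeness --- which is itself part of Theorem \ref{theorem_GL_intro}, making your proposed use of it circular --- lets one pass to $\bb Z_{(\ell)}$-coefficients. For the mod $p^r$ statement $K_n(\roi)/p^r\cong W_r\Omega^n_{\roi,\sub{log}}$ there is no analogous transfer: for $\roi_{\sub{perf}}$ one has $W_r\Omega^n_{\roi_{\sub{perf}}}=0$ and $K_n(\roi_{\sub{perf}})/p^r=0$ for $n\ge1$, so the theorem holds trivially for the perfection but carries no information about $\roi$. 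The entire content of the theorem is destroyed under perfection, so there is nothing to descend.

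The paper's actual proof sidesteps resolution by isolating the axiomatic notion of a \emph{Cartier smooth} $\bb F_p$-algebra (flatness of $\Omega^1$, vanishing of higher cotangent homology, and the inverse Cartier isomorphism), verifying these axioms for valuation rings using deep theorems of Gabber--Ramero and Gabber, and then proving the Geisser--Levine statement for Cartier smooth local algebras via trace methods: the rigidity theorem of Clausen--Mathew--Morrow reduces the problem to identifying $\TC(\roi;\bb Z/p^r)$, which in turn is controlled by showing that the derived de Rham--Witt theory of a Cartier smooth algebra behaves as in the smooth case. This is the part of the argument your proposal collapses to ``essentially a repackaging of Geisser--Levine,'' but it is where the real work happens; it is not a colimit argument at all.
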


Here $W_r\Omega^n_{\roi,\sub{log}}$, often denoted by $\nu_r^n(\roi)$, is the subgroup of the de Rham--Witt group $W_r\Omega^n_\roi$ generated by dlog forms; see Remark \ref{remark_dlog} for further information.

Let us sketch the ideas of the proof of Theorem \ref{theorem_GL_intro}. By a recent result of the second author joint with Clausen--Mathew \cite{ClausenMathewMorrow}, the trace map $\op{tr}:K_n(-;\bb Z/p^r\bb Z)\to\TC_n(-;\bb Z/p^r\bb Z)$ is naturally split injective on the category of local $\bb F_p$-algebras. The proof of this injectivity reduces Theorem \ref{theorem_GL_intro} to showing that the topological Hochschild and cyclic homologies of $\roi$ behave as though it were a regular Noetherian local $\bb F_p$-algebra, which in turn comes down to controlling the derived de Rham and derived de Rham--Witt cohomologies of $\roi$. This may be done by means of deep results of Gabber--Ramero and Gabber on the cotangent complex and de Rham cohomology of valuation rings.

The second main result of the article is that the injectivity part of Gersten's conjecture remains true for valuation rings over a field; away from the characteristic this follows from the existence of alterations, while the torsion-freeness in Theorem \ref{theorem_GL_intro} provides the missing ingredient at the characteristic:

\begin{theorem}\label{theorem_gersten_intro}
Let $\roi$ be a valuation ring containing a field. Then $K_n(\roi)\to K_n(\Frac\roi)$ is injective for all $n\ge0$.
\end{theorem}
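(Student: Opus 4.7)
Write $K=\Frac\roi$ and let $p\ge 1$ denote the characteristic exponent of the residue field of $\roi$; thus $p=1$ precisely when $\roi\supseteq\bb Q$. The plan is to verify injectivity separately after inverting $p$ and integrally at $p$, and combine.

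The first half amounts to showing that $K_n(\roi)[\tfrac1p]\to K_n(K)[\tfrac1p]$ is injective, using only the existence of alterations. Since algebraic $K$-theory commutes with filtered colimits of rings, $\roi$ is the filtered colimit of its finitely generated $k$-subalgebras $A$ (for $k$ the prime subfield of $\roi$), and one reduces to the following: a class $\alpha\in K_n(A)$ which dies in $K_n(\Frac A)$ must die in $K_n(\roi)$ after inverting $p$. Localising $A$ at the contraction of the maximal ideal of $\roi$, one may assume $A$ is local. For each prime $\ell\neq p$ -- now invertible on $\Spec A$ -- Gabber's refinement of de Jong's alteration theorem produces a regular alteration $\pi\colon X\to\Spec A$ of generic degree coprime to $\ell$. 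Quillen's Gersten theorem on the smooth $k$-scheme $X$ then shows that $\pi^{*}\alpha$ is already trivial in the relevant local $K$-groups of $X$, and a transfer along the generically finite $\pi$ multiplies $\alpha$ by an integer coprime to $\ell$, thereby annihilating it in $K_n(A)_{(\ell)}$. Ranging over all $\ell\ne p$ yields the desired injectivity after inverting $p$.

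For the second half, suppose $p>0$. Theorem \ref{theorem_GL_intro} asserts that $K_n(\roi)$ is $p$-torsion free, so $K_n(\roi)\hookrightarrow K_n(\roi)[\tfrac1p]$. Composing with the first half gives
\[
K_n(\roi)\hookrightarrow K_n(\roi)\bigl[\tfrac1p\bigr]\hookrightarrow K_n(K)\bigl[\tfrac1p\bigr],
\]
a composite which factors through $K_n(K)$, so $K_n(\roi)\to K_n(K)$ is itself injective.

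The main obstacle is the alteration-and-transfer step of the first half: the finitely generated subalgebras $A\subseteq\roi$ are in general not regular, so one must carefully set up the local reduction, apply Gabber's $\ell$-alterations, and extract the transfer formula (multiplication by the generic degree modulo prime-to-$\ell$ units) from the regular alteration using Quillen's Gersten theorem. The $p$-torsion freeness supplied by Theorem \ref{theorem_GL_intro} is exactly the input that the alteration method cannot provide at the residue characteristic, which is why Theorem \ref{theorem_GL_intro} is the missing ingredient.
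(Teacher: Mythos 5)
Your overall strategy is exactly the one used in the paper: split the claim into injectivity after inverting $p$ (handled by alterations) and integrally at $p$ (handled by the $p$-torsion-freeness of Theorem \ref{theorem_GL_intro}), and the glue between the two halves is presented correctly. However, your sketch of the away-from-$p$ half has a real gap at the transfer step, which is in fact exactly where the paper's argument does most of its work.

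You propose ``a transfer along the generically finite $\pi$'' which ``multiplies $\alpha$ by an integer coprime to $\ell$''. But $\pi\colon X\to\Spec A$ is a projective, generically finite alteration over a typically \emph{singular} affine scheme, so it is neither finite nor flat, and $A$ is not regular; there is no $K$-theory pushforward $K_n(X)\to K_n(A)$ along $\pi$ giving multiplication by the generic degree. (One could try a $G$-theory pushforward and compare with $K$-theory, or base change $\pi$ to $\Spec\roi$ and invoke the finite global dimension of valuation rings to identify $K(\roi)$ with $G(\roi)$, but then the composite $\pi_*\pi^*$ is multiplication by $[R\pi_*\OO_X]\in K_0$, which contains correction terms supported on the non-flat locus and is not just $m$.) The paper avoids this entirely: it never pushes forward along $\pi$. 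Instead it (1) forms the integral closure $\tilde\roi$ of $\roi$ inside $K(X)$, a semi-local ring whose localisations are exactly the finitely many extensions of the valuation to $K(X)$; (2) uses the valuative criterion to send each of those maximal ideals to a point $x_i\in X$, then prime avoidance to find an affine open, i.e.\ a regular semi-local ring $B\subset K(X)$ contained in $\tilde\roi$; (3) applies Quillen's Gersten conjecture to $B$ (regular, semi-local, essentially of finite type over a field) to conclude $\alpha$ dies in $K_n(B)$, hence in $K_n(\tilde\roi)$, hence in $K_n(\roi')$ for some finite sub-$\roi$-algebra $\roi'\subset\tilde\roi$; and (4) transfers along the \emph{finite ring extension} $\roi\to\roi'$, which exists and is multiplication by $m=|\Frac\roi':F|$ because finitely generated torsion-free modules over a valuation ring are free. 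So the transfer is against a finite extension of $\roi$ itself, not against the alteration, and the role of the alteration is only to produce the field extension $K(X)/F$ of degree coprime to $\ell$ together with a regular semi-local ``model'' $B$ inside it through which all the extended valuation rings factor. You flagged this step as ``the main obstacle'', which shows good instincts, but the mechanism you describe for resolving it is the wrong one and would not go through as stated.
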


Presheaves which satisfy this injectivity property are studied in \cite{HuberKelly2018} where they are called ``torsion free''.

Another important feature of the $K$-theory of regular Noetherian rings is homotopy invariance. We observe that this extends to valuation rings:

\begin{theorem}\label{theorem_K_of_val_intro}
Let $\roi$ be a valuation ring. Then
\begin{enumerate}
\item $K_n(\roi)\to K_n(\roi[T_1,\dots,T_d])$ is an isomorphism for all $d\ge1$ and $n\in\bb Z$;
\item $K_n(\roi)\to \KH_n(\roi)$ is an isomorphism for all $n\in\bb Z$;
\item $K_n(\roi)=0$ for all $n<0$.
\end{enumerate}
\end{theorem}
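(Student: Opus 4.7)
The three assertions are tightly linked. I would first establish (ii) for $\roi$ together with its analogue for each polynomial algebra $\roi[T_1,\ldots,T_d]$; then (i) follows from the $\bb A^1$-invariance of $KH$, and (iii) reduces to checking that $KH_n(\roi) = 0$ for $n < 0$, which holds for instance whenever $\roi$ is a filtered colimit of regular Noetherian rings or when the cotangent complex is sufficiently controlled.

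The key technical input is the theorem of Gabber--Ramero, already invoked in the proof of Theorem \ref{theorem_GL_intro}, that the cotangent complex $L_{\roi/\bb Z}$ of a valuation ring is a flat $\roi$-module concentrated in degree $0$; by standard base-change, $L_{\roi[T_1,\ldots,T_d]/\bb Z}$ enjoys the same property. Combined with the identification, following Cortin\~{a}s and Kerz--Strunk--Tamme, of the fibre of the trace $K^B \to KH$ with an object controlled by derived cyclic or de Rham--Witt cohomology, this should force $K(\roi) \simeq KH(\roi)$ and similarly for the polynomial extensions. With (ii) in hand, (i) is immediate from the fact that $KH_n(\roi) = KH_n(\roi[T_1,\ldots,T_d])$.

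For (iii), the argument is cleanest in equi-characteristic $0$, where Temkin's theorem realises $\roi$ as a filtered colimit of smooth $\bb Q$-algebras and so, since non-connective $K$-theory is finitary and vanishes in negative degrees on regular Noetherian rings, $K_n(\roi) = 0$ for $n < 0$. In equi-characteristic $p$, Theorem \ref{theorem_GL_intro} handles the $p$-primary part while alteration techniques of de Jong and Gabber, together with the rationalisation via cyclic homology, control the prime-to-$p$ part. The main obstacle is the mixed-characteristic case, where neither the ind-smoothness shortcut nor Theorem \ref{theorem_GL_intro} is directly available; here one must run the cdh-descent/cotangent-complex argument in its full generality, which is presumably where the new proofs of cdh descent announced in the abstract come into play, reducing the vanishing of the fibre of $K \to KH$ on $\roi$ to a calculation in derived de Rham--Witt cohomology that is controlled by the flatness of $L_{\roi/\bb Z}$.
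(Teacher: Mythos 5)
Your proposed route is quite different from the paper's and, more importantly, does not close as a proof. The paper's argument (crediting Gersten and Weibel) is entirely classical homological algebra: Quillen's fundamental theorem, homotopy invariance, vanishing of negative $K$-theory, and hence $K\simeq \KH$ all go through verbatim for any ring that is \emph{stably coherent} and has \emph{finite global dimension}. For a valuation ring these are old facts: a finitely generated ideal of $\roi[T_1,\dots,T_d]$ is torsion-free, hence $\roi$-flat, hence finitely presented by Raynaud--Gruson, giving coherence; and after reducing (by a filtered-colimit argument that makes $\Frac\roi$ finitely generated over its prime field) to the case that $\roi$ is countable, Osofsky's theorem gives global dimension $\le 2$, which Hilbert's syzygy theorem propagates to polynomial rings. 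No cotangent complexes, cyclic homology, or cdh descent appear, and the argument is uniform across residue characteristics, including mixed characteristic.

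Your sketch, by contrast, has several concrete gaps. First, a circularity: you say you would ``first establish (ii) for $\roi$ together with its analogue for each polynomial algebra $\roi[T_1,\dots,T_d]$'' and then deduce (i) from $\bb A^1$-invariance of $\KH$; but proving $K\simeq\KH$ on $\roi[T_1,\dots,T_d]$ is essentially equivalent to controlling the Bass nil $K$-groups $N\!K_n(\roi)$, i.e.\ to assertion (i) itself, and you offer no independent mechanism for the polynomial case. Second, the Cortiñas-- and Kerz--Strunk--Tamme-type identifications of the fibre of $K\to\KH$ with cyclic or Hochschild data hold only rationally, or pro-nilpotently; flatness of a cotangent complex does not by itself yield an integral equivalence $K(\roi)\simeq\KH(\roi)$. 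Third, in equicharacteristic $0$ you attribute the colimit-of-smooth-$\bb Q$-algebras presentation to Temkin, but Temkin's theorem (cited in the paper after Theorem \ref{proposition_gersten}) is an inseparable local uniformisation result in characteristic $p$; the characteristic-$0$ statement is classical resolution of singularities. Finally, you acknowledge that the mixed-characteristic case is the serious one and resolve it with a ``presumably''; the paper's key observation is precisely that one never needs any of this machinery, because stable coherence and finite global dimension suffice and have been known for valuation rings since the 1970s.
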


In fact, the ``proof'' of Theorem \ref{theorem_K_of_val_intro} is simply the fact, observed by Gersten and Weibel in the 1980s, that stable coherence and finite global dimension are enough for the usual proof (from the regular Noetherian case) of the assertions to go through. Since these homological properties of valuation rings have been known since the 1970s, Theorem \ref{theorem_K_of_val_intro} has in principle been available for many years. But we are not aware of it having been previously noticed. We refer the interested reader to Remark \ref{remark_KH} for historical comments and relations to properties of algebraic $K$-theory in the cdh topology.

Remaining on the theme of cdh topology, we also give a new proof that $K[\tfrac1p]$ satisfies cdh descent in characteristic $p$ in the following sense:

\begin{theorem}\label{theorem_cdhp_intro}
Let
\[\xymatrix{
Y'\ar[d]\ar[r] & X'\ar[d] \\
Y\ar[r] & X
}\]
be an abstract blow-up square of schemes in which $p$ is nilpotent; assume that $X$ is quasi-compact quasi-separated and that the morphisms $X'\to X$ and $Y\into X$ are of finite presentation. Then the resulting square 
\[\xymatrix{
K(X)[\tfrac1p]\ar[d]\ar[r] & K(X')[\tfrac1p]\ar[d] \\
K(Y)[\tfrac1p]\ar[r] & K(Y')[\tfrac1p]
}\]
is homotopy cartesian.
\end{theorem}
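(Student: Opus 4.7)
The plan is to reduce to the case of Noetherian $\bb F_p$-schemes of finite Krull dimension, and then combine pro-cdh descent for $K$-theory with the vanishing of relative $K$-theory of nilpotent extensions in characteristic $p$.

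First, I would apply Thomason--Trobaugh style noetherian approximation. The finite-presentation hypotheses on $X'\to X$ and $Y\into X$ allow the square to be realised as a cofiltered limit of abstract blow-up squares of Noetherian schemes with affine transition maps, and continuity of $K$-theory on qcqs schemes then reduces the claim to this Noetherian setting. Passing to reductions (which is harmless for $K(-)[\tfrac{1}{p}]$ in characteristic $p$, cf.\ the next paragraph), I may further assume that $X$, $X'$, $Y$, $Y'$ are Noetherian $\bb F_p$-schemes of finite Krull dimension.

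Next I would invoke the pro-cdh descent theorem of Kerz--Strunk--Tamme, which for our abstract blow-up square yields a homotopy cartesian square of pro-spectra in which $K(Y)$ and $K(Y')$ are replaced by the pro-systems $\{K(Y_n)\}$ and $\{K(Y_n\times_X X')\}$ indexed by the infinitesimal thickenings $Y_n$ of $Y$ in $X$. To promote this to an honest homotopy cartesian square of spectra after inverting $p$, it suffices to show that each transition map $K(Y_n)[\tfrac{1}{p}]\to K(Y)[\tfrac{1}{p}]$ is an equivalence, equivalently that the relative $K$-theory of each nilpotent surjection $\roi_{Y_n}\onto\roi_Y$ of $\bb F_p$-algebras vanishes after inverting $p$. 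Rationally this is Goodwillie's theorem; for each prime $\ell\ne p$, the mod-$\ell$ vanishing is Gabber--Suslin rigidity applied to the Henselian pair $(\roi_{Y_n},\ker(\roi_{Y_n}\to\roi_Y))$.

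The main obstacle will be this last vanishing together with the passage from the rational and mod-$\ell$ statements to the single $[\tfrac{1}{p}]$-statement, which requires controlling $p$-divisibility in the pro-system. An alternative approach more in the spirit of this paper -- avoiding Kerz--Strunk--Tamme altogether -- would be to exploit that the cdh topos on Noetherian $\bb F_p$-schemes has enough valuation-ring-like stalks, and then verify descent at these stalks directly using Theorem \ref{theorem_GL_intro} together with the Gersten injectivity of Theorem \ref{theorem_gersten_intro}.
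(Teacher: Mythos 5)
Your proposal is correct in outline but takes a genuinely different route from the paper, one that the paper itself identifies as the ``previously available'' proof rather than the new one. Your path is Noetherian approximation, then Kerz--Strunk--Tamme pro-cdh descent, then collapsing the pro-system using nil-invariance of $K[\tfrac1p]$ in characteristic $p$. This is precisely what the paper records in Remark \ref{remark_pro_cdh}(iii), and the introduction explicitly notes that ``one can probably even reduce to that case via Noetherian approximation.'' The paper's own proof goes elsewhere entirely: it reduces to $\bb F_p$-schemes by nil-invariance, then replaces all four schemes by their perfections using Lemma \ref{lemma_radicial_maps} (that $K(A)[\tfrac1p]\to K(A_\sub{perf})[\tfrac1p]$ is an equivalence, proved by a finite-flat descent trick), and finally verifies Tamme's excision criterion \cite{Tamme2017} for the square of $\infty$-categories $D_\sub{qc}$ of the perfected schemes, using the Bhatt--Scholze structure theory of perfect schemes (their derived base change and the fact that closed immersions of perfect schemes have $Li^*Ri_*=\mathrm{id}$). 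What each buys: your route reuses heavy machinery (KST) and needs an approximation step but is otherwise formal; the paper's route avoids both Noetherian hypotheses and pro-cdh entirely and instead trades on homological miracles special to perfect schemes, which is the conceptual novelty.

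Two smaller remarks. First, the ``main obstacle'' you flag at the end is not really one: if $A$ is a $\bb Z/p^N$-algebra and $I\subseteq A$ is nilpotent, then Goodwillie gives $K(A,I)_{\bb Q}\simeq HC(A\otimes\bb Q, I\otimes\bb Q)[1]=0$, and rigidity for the Henselian (because nilpotent) pair $(A,I)$ gives $K(A,I;\bb Z/\ell)=0$ for $\ell\neq p$; the first shows each $K_n(A,I)$ is torsion, the second shows multiplication by $\ell$ is an isomorphism on $K_n(A,I)$ for $\ell\neq p$, and together these force $K_n(A,I)$ to be $p$-power torsion, i.e.\ $K(A,I)[\tfrac1p]=0$. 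So the pro-system $\{K(Y_n)[\tfrac1p]\}_n$ is literally constant and no subtle ``control of $p$-divisibility'' is needed. Second, the sketched ``alternative approach'' via valuation-ring stalks of the cdh topos does not work as stated: knowing the values of a presheaf of spectra on a conservative family of points does not by itself prove that the presheaf satisfies descent for the topology, which is what is being asserted. (The valuation-ring computations in this paper enter cdh-theoretic statements only \emph{after} one already knows that $\KH$, or $K[\tfrac1p]$, is a cdh sheaf; see Remark \ref{remark_KH}.)
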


Theorem \ref{theorem_cdhp_intro} is only conceivably new in the case that $X$ is not Noetherian (and one can probably even reduce to that case via Noetherian approximation), but the method of proof is completely different to what has appeared previously: we replace all the schemes by their perfections (having reduced to the case of $\bb F_p$-schemes) and then use results of Bhatt--Scholze \cite{BhattScholze2017} on the homological properties of perfect schemes to check that Tamme's recent excision condition \cite{Tamme2017} is satisfied. See Remark \ref{remark_pro_cdh} for some comparisons to related results.

\subsubsection*{Acknowledgements}
We thank Georg Tamme for correspondence about Theorem \ref{theorem_cdhp_intro} (in particular, he was already aware that the results of Bhatt--Scholze could be used to apply \cite{Tamme2017} to blow-ups of perfect schemes) and Ben Antieau for his comments.

\section{$p$-adic $K$-theory of valuation rings in characteristic $p$}
The goal of this section is to prove Theorem \ref{theorem_GL_intro} from the introduction. More generally, we work with $\bb F_p$-algebras $A$ satisfying the following smoothness criteria (Sm1--3), which we paraphrase by saying that $A$ is {\em Cartier smooth}:
\begin{enumerate}[(Sm1)]
\item $\Omega^1_{A}$ is a flat $A$-module;
\item $H_n(\bb L_{A/\bb F_p})=0$ for all $n>0$;
\item  the inverse Cartier map\footnote{For any $\bb F_p$-algebra $A$, recall that the inverse Cartier map $C^{-1}:\Omega^n_A\to H^n(\Omega^\blob_A)$ is the linear map satisfying $C^{-1}(fdg_1\wedge\cdots\wedge dg_n)=f^pg_1^{p-1}\cdots g_n^{p-1}dg_1\wedge\cdots\wedge dg_n$.} $C^{-1}:\Omega_{A}^n\to H^n(\Omega^\blob_{A})$ is an isomorphism for each $n\ge0$.
\end{enumerate}

The above criteria are of course satisfied if $A$ is a smooth $\bb F_p$-algebra, or more generally if $A$ is a regular Noetherian $\bb F_p$-algebra by N\'eron--Popescu desingularisation (an alternative proof avoiding N\'eron--Popescu may be found in the recent work of Bhatt--Lurie--Mathew \cite[Thm.~9.5.1]{BhattLurieMathew2018}), or even an ind-smooth $\bb F_p$-algebra. However, in the case in which $A$ is ind-smooth, the forthcoming results trivially reduce to the smooth case, where they are known.

More interestingly, any valuation ring $\roi$ of characteristic $p$ is Cartier smooth: the first two criteria are due to Gabber--Ramero \cite[Thm.~6.5.8(ii) \& Corol.~6.5.21]{GabberRamero2003}, while the Cartier isomorphism (Sm3) is a recent unpublished result of Gabber obtained by refining his earlier work with Ramero. (Of course, resolution of singularities in characteristic $p$ would imply that $\roi$ is ind-smooth, but we wish to avoid assuming resolution of singularities.) Therefore Theorem \ref{theorem_GL_intro} is a special case of the following more general result:

\begin{theorem}\label{theorem_GL_val_general}
Let $A$ be any Cartier smooth, local $\bb F_p$-algebra, and $n\ge0$. Then $K_n(A)$ is $p$-torsion-free and there is a natural isomorphism $K_n(A)/p^r\isoto W_r\Omega^n_{A,\sub{log}}$ given by $\dlog[\cdot]$ on symbols.
\end{theorem}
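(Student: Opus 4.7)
The plan is to combine the Clausen--Mathew--Morrow splitting of the cyclotomic trace with an explicit computation of $\TC_n(A;\bb Z/p^r\bb Z)$ in terms of logarithmic de Rham--Witt. Applying the splitting to the local $\bb F_p$-algebra $A$ gives a natural injection $K_n(A;\bb Z/p^r\bb Z)\hookrightarrow\TC_n(A;\bb Z/p^r\bb Z)$; together with the universal coefficient sequence $0\to K_n(A)/p^r\to K_n(A;\bb Z/p^r\bb Z)\to K_{n-1}(A)[p^r]\to 0$, the theorem reduces to identifying $\TC_n(A;\bb Z/p^r\bb Z)\cong W_r\Omega^n_{A,\sub{log}}$ naturally and checking that the cyclotomic trace, composed with this identification, recovers the dlog symbol map. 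For then surjectivity of dlog --- automatic since $W_r\Omega^n_{A,\sub{log}}$ is generated by such symbols by definition --- together with injectivity of the trace forces $K_n(A)/p^r\isoto W_r\Omega^n_{A,\sub{log}}$ and $K_{n-1}(A)[p^r]=0$, the latter (applied to $n{+}1$) delivering the $p$-torsion-freeness.

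The heart of the argument is the computation of $\THH$ and $\TC$. Conditions (Sm1) and (Sm2) say that the cotangent complex $\bb L_{A/\bb F_p}$ is concentrated in degree $0$ as the flat module $\Omega^1_A$; the Hochschild--Kostant--Rosenberg filtration then yields $\pi_*\THH(A;\bb F_p)\cong\Omega^*_A$ and, crucially, shows that the derived de Rham complex $L\Omega_{A/\bb F_p}$ coincides with the classical $\Omega^\bullet_A$. Condition (Sm3) equips this complex with its Cartier isomorphism, providing exactly the input needed to run the Geisser--Hesselholt computation (or, equivalently, the Nikolaus--Scholze / Bhatt--Morrow--Scholze formalism) in this generality: one shows that the derived de Rham--Witt complex of $A$ agrees with the classical $W_r\Omega^\bullet_A$ carrying its usual Frobenius and Cartier operators, and that $\TC_n(A;\bb Z/p^r\bb Z)$ is identified with the Frobenius-fixed logarithmic subgroup $W_r\Omega^n_{A,\sub{log}}$. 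Naturality of the cyclotomic trace then identifies the composite $K_n(A)/p^r\to \TC_n(A;\bb Z/p^r\bb Z)\cong W_r\Omega^n_{A,\sub{log}}$ with the symbol map $\{a_1,\dots,a_n\}\mapsto\dlog[a_1]\cdots\dlog[a_n]$.

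The principal obstacle lies in this $\TC$ computation: while the identification is classical for smooth $\bb F_p$-algebras via Geisser--Hesselholt, here one must verify that Cartier smoothness alone --- without full geometric smoothness, regularity, or even Noetherianness --- suffices to collapse the derived de Rham--Witt complex onto the classical one and to ensure that the relevant Frobenius-fixed point computation isolates exactly the logarithmic part in each degree. This is precisely where the Gabber--Ramero and Gabber results underlying (Sm1--3) do the essential new work, replacing the N\'eron--Popescu desingularisation argument available in the Noetherian setting.
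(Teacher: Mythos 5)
Your overall strategy --- reduce to trace injectivity from Clausen--Mathew--Morrow, compute $\TR$ and $\TC$ via left Kan extension of the Hesselholt computation using the ``already derived at Cartier smooth'' property of $W_r\Omega^n_-$, then identify the dlog map --- is the same as the paper's, and the preparatory de Rham--Witt analysis you sketch (derived de Rham--Witt collapses to classical, Cartier isomorphism controls everything) is exactly what the paper establishes in Propositions \ref{proposition_WOmega_already_derived}--\ref{proposition_filtrations}, Theorem \ref{theorem_eta} and Lemma \ref{lemma_dlog}.

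However, there is a genuine gap at the crucial step. You assert that the $\TC$ computation yields a natural isomorphism $\TC_n(A;\bb Z/p^r\bb Z)\cong W_r\Omega^n_{A,\sub{log}}$, and then argue that injectivity of the trace plus surjectivity of dlog forces $K_n(A)/p^r\isoto W_r\Omega^n_{A,\sub{log}}$. But this identification of $\TC_n$ is false even for smooth local $\bb F_p$-algebras. The fiber sequence $\TC(A;\bb Z/p^r\bb Z)\to \TR(A;\bb Z/p^r\bb Z)\xrightarrow{F-1}\TR(A;\bb Z/p^r\bb Z)$ combined with Lemma \ref{lemma_dlog} gives a short exact sequence
\[
0\to \op{Coker}\bigl(W_r\Omega^{n+1}_A\xrightarrow{\res F-\pi}W_r\Omega^{n+1}_A/dV^{r-1}\Omega^n_A\bigr)\to\TC_n(A;\bb Z/p^r\bb Z)\to W_r\Omega^n_{A,\sub{log}}\to 0,
\]
and the cokernel term on the left is in general nonzero. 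Consequently $\TC_n(A;\bb Z/p^r\bb Z)$ is an extension of $W_r\Omega^n_{A,\sub{log}}$ by something extra, and injectivity of $\op{tr}$ alone does \emph{not} force the composite $K_n(A;\bb Z/p^r\bb Z)\to\TC_n(A;\bb Z/p^r\bb Z)\twoheadrightarrow W_r\Omega^n_{A,\sub{log}}$ to be injective: the image of $K$ inside $\TC$ could a priori intersect the cokernel nontrivially.

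The missing idea is the comparison with an ind-smooth local lift. One picks a free $\bb F_p$-algebra surjecting onto $A$ and lets $R$ be its Henselisation along the kernel; $R$ is local ind-smooth, hence the composite $c_R: K_n(R;\bb Z/p^r\bb Z)\to\TC_n(R;\bb Z/p^r\bb Z)\to W_r\Omega^n_{R,\sub{log}}$ is an isomorphism by Geisser--Levine and Geisser--Hesselholt. One then needs two facts specific to the Henselian pair $R\to A$: (1) the main rigidity theorem of Clausen--Mathew--Morrow says the square of spectra formed by applying $\op{tr}: K(-;\bb Z/p^r\bb Z)\to\TC(-;\bb Z/p^r\bb Z)$ to $R\to A$ is homotopy cartesian (this uses much more than injectivity of the trace for a single local ring); and (2) the vertical map between the cokernel terms for $R$ and $A$ is an isomorphism. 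Concatenating the resulting bicartesian squares shows $c_A$ is an isomorphism because $c_R$ is. Without this comparison argument, your reasoning does not go through.
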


\begin{remark}
The extra degree of generality afforded by the axiomatic set-up of Theorem \ref{theorem_GL_val_general} is not without interest. It also applies, for example, to essentially smooth, local $S$-algebras, where $S$ is any perfect ring of characteristic $p$.
\end{remark}

\begin{remark}\label{remark_dlog}
We recall the definition of the group of dlog forms. Given a local $\bb F_p$-algebra $A$, we define $W_r\Omega^n_{A,\sub{log}}\subseteq W_r\Omega^n_A$ to be the subgroup generated by dlog forms $\tfrac{d[\al_1]}{[\al_1]}\wedge\cdots \wedge\tfrac{d[\al_n]}{[\al_n]}$, where $\al_1,\dots,\al_n\in A^\times$; i.e., the image of $\dlog[\cdot]:K_n^M(A)\to W_r\Omega^n_A$. If $A$ is not necessarily local, then $W_r\Omega^n_{A,\sub{log}}\subseteq W_r\Omega^n_A$ is defined to be the subgroup generated Zariski locally by such forms; the fact that this coincides with the more classical definition as the subgroup generated \'etale locally by dlog forms was proved in \cite[Corol.~4.2(i)]{Morrow_pro_GL}.

In particular, under the hypotheses of Theorem \ref{theorem_GL_val_general}, the canonical map $K_n^M(A)/p^r\to K_n(A)/p^r$ is surjective. We do not know whether it is an isomorphism (even under the usual assumption that $A$ has large enough residue field), as we do not know of any way to control Milnor $K$-theory using derived de Rham cohomology.
\end{remark}

\subsection{The de Rham--Witt complex of a Cartier smooth algebra}
We study in this subsection the behaviour of certain functors, such as de Rham--Witt groups, on Cartier smooth $\bb F_p$-algebras. In essence, we must show that all familiar results about the de Rham--Witt complex extend from smooth algebras to Cartier smooth algebras, thereby justifying the well-known maxim that it is the Cartier isomorphism which controls the structure of the de Rham--Witt complex.

Given a functor $\cal F:\bb F_p\op{-algs}\to\op{Ab}$ commuting with filtered colimits, let $\bb L\cal F:\bb F_p\op{-algs}\to D_{\ge0}(\op{Ab})$ be the left Kan extension of its restriction to finitely generated free $\bb F_p$-algebras. More classically, following Quillen's theory of non-abelian derived functors, $\bb L\cal F(A)$ is given by the simplicial abelian group $q\mapsto F(P_q)$ (or more precisely the associated chain complex via Dold--Kan), where $P_\blob\to A$ is any simplicial resolution of $A$ by free $\bb F_p$-algebras. 

Let $A\in \bb F_p\op{-algs}$ and $\cal F$ a functor as above. In general, the resulting canonial map $H_0(\bb L\cal F(A))\to\cal F(A)$ need not be an isomorphism; when this is true we will say that $\cal F$ is {\em right exact at $A$}. When $H_i(\cal F(A))=0$ for all $i>0$, we will say that $\cal F$ is {\em supported in degree $0$ at $A$}. Thus $\cal F$ is both right exact and supported in degree $0$ at a given $A$ if and only if the canonical map $\bb L\cal F(A)\to \cal F(A)[0]$ is an equivalence, in which case we say that $\cal F$ is {\em already derived at $A$}.

The following lemma is straightforward but will be used repeatedly:

\begin{lemma}\label{lemma}
Let $\cal F\to\cal G\to\cal H$ be a maps of functors $\bb F_p\op{-algs}\to\op{Ab}$ which commute with filtered colimits, such that $0\to \cal F(R)\to\cal G(R)\to\cal H(R)\to 0$ is exact for any finitely generated free $\bb F_p$-algebra $R$. Fix $A\in\bb F_p\op{-algs}$.

\begin{enumerate}
\item Suppose that two of $\cal F,\cal G,\cal H$ are already derived at $A$. Then so is the third if and only if the sequence $0\to \cal F(A)\to\cal G(A)\to\cal H(A)\to 0$ is exact.
\item Assume that $\cal F$ and $\cal G$ are already derived at $A$, and that $\cal F(A)\to\cal G(A)$ is injective. Then $\cal H$ is supported in degree $0$ at $A$.
\item Assume that $\cal F$ and $\cal H$ are supported in degree $0$ at $A$. Then $\cal G$ is supported in degree $0$ at $\cal G$.
\end{enumerate}
\end{lemma}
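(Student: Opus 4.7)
The plan is to reduce everything to a single long exact sequence produced by left Kan extension. The key observation is that left Kan extension along the inclusion of finitely generated free $\bb F_p$-algebras is exact in the sense that a short exact sequence of functors on free algebras yields a fiber sequence after deriving. More concretely, fix any simplicial resolution $P_\bullet\to A$ by finitely generated free $\bb F_p$-algebras. By hypothesis, applying $\cal F,\cal G,\cal H$ levelwise yields a short exact sequence of simplicial abelian groups
\[0\To \cal F(P_\blob)\To \cal G(P_\blob)\To \cal H(P_\blob)\To 0,\]
which translates (via Dold--Kan) into a fiber sequence $\bb L\cal F(A)\to \bb L\cal G(A)\to \bb L\cal H(A)$ in $D_{\ge0}(\op{Ab})$, and hence into a long exact sequence of homology
\[\cdots\To H_{i+1}(\bb L\cal H(A))\To H_i(\bb L\cal F(A))\To H_i(\bb L\cal G(A))\To H_i(\bb L\cal H(A))\To H_{i-1}(\bb L\cal F(A))\To\cdots\]
which terminates in degree $-1$ because $\bb L\cal F(A)$, $\bb L\cal G(A)$ and $\bb L\cal H(A)$ are all connective.

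Given this long exact sequence, all three claims become straightforward diagram chases. For (i), assume without loss of generality that $\cal F$ and $\cal G$ are already derived at $A$, so that the LES reduces, in degrees $i\ge2$, to $0\to H_i(\bb L\cal H(A))\to 0$, forcing $H_i(\bb L\cal H(A))=0$; while in low degrees it becomes
\[0\To H_1(\bb L\cal H(A))\To \cal F(A)\To \cal G(A)\To H_0(\bb L\cal H(A))\To 0.\]
Thus $\cal H$ is already derived at $A$ (i.e.\ $H_1(\bb L\cal H(A))=0$ and $H_0(\bb L\cal H(A))\isoto\cal H(A)$) if and only if $\cal F(A)\to\cal G(A)$ is injective and the induced map $\cal G(A)/\cal F(A)\to\cal H(A)$ is an isomorphism, i.e.\ if and only if $0\to\cal F(A)\to\cal G(A)\to\cal H(A)\to0$ is exact. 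The two remaining cases where $(\cal F,\cal H)$ or $(\cal G,\cal H)$ are already derived are entirely analogous.

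For (ii), the same reduction as in (i) shows $H_i(\bb L\cal H(A))=0$ for $i\ge2$, and the low-degree piece of the LES identifies $H_1(\bb L\cal H(A))$ with $\ker(\cal F(A)\to\cal G(A))$, which vanishes by hypothesis. For (iii), the relevant piece of the LES in degrees $i\ge1$ reads
\[H_{i+1}(\bb L\cal H(A))\To H_i(\bb L\cal F(A))\To H_i(\bb L\cal G(A))\To H_i(\bb L\cal H(A)),\]
in which the outer terms all vanish by assumption, forcing $H_i(\bb L\cal G(A))=0$ as required. No real obstacle is anticipated; the only subtlety is remembering that the derived comparison map goes in the direction $\bb L\cal F(A)\to\cal F(A)[0]$, so that one must carefully track which direction of injectivity/surjectivity is being extracted from the five-term sequence in (i).
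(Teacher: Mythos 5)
Your proof is correct and follows the same route as the paper's: form the fibre sequence $\bb L\cal F(A)\to\bb L\cal G(A)\to\bb L\cal H(A)$ by applying the functors levelwise to a simplicial resolution and invoking Dold--Kan, compare it to $\cal F(A)\to\cal G(A)\to\cal H(A)$, and read off each assertion from the long exact homology sequence. (One cosmetic point: a simplicial resolution by \emph{finitely generated} free $\bb F_p$-algebras exists only when $A$ is finitely generated; for general $A$ one takes a resolution by arbitrary free algebras and observes that the short exact sequence hypothesis extends from finitely generated free to all free $\bb F_p$-algebras because the functors commute with filtered colimits.)
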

\begin{proof}
The hypotheses provide us with a fibre sequence $\bb L\cal F(A)\to\bb L\cal G(A)\to\bb L\cal H(A)$  mapping to $\cal F(A)\to\cal G(A)\to\cal H(A)$. The assertions are then all easy to check.
\end{proof}

\begin{proposition}\label{proposition_WOmega_already_derived}
The following functors are already derived at any Cartier smooth $\bb F_p$-algebra:
\[\Omega^n_{-}\quad Z\Omega^n_{-}\quad B\Omega^n_{-}\quad W_r\Omega^n_{-}.\] 
\end{proposition}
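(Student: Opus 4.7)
The plan is to prove the four assertions in the order listed -- $\Omega^n$, then $Z\Omega^n$ and $B\Omega^n$, then $W_r\Omega^n$ -- by repeatedly applying Lemma \ref{lemma}(i) to short exact sequences drawn from classical (smooth) de Rham or de Rham--Witt theory; each such sequence is short exact on finitely generated free $\bb F_p$-algebras, which is exactly the hypothesis of the lemma. For $\Omega^n$: on a free $\bb F_p$-algebra $P$ one has $\bb L_{P/\bb F_p}\simeq\Omega^1_P[0]$, so $\bb L\Omega^1$ agrees with the (Kan-extended) cotangent complex functor, and evaluating at $A$ together with (Sm2) gives $\bb L\Omega^1(A)\simeq\Omega^1_A[0]$. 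Writing $\Omega^n=\wedge^n\Omega^1$, derived Kan extension becomes the derived exterior power $\wedge^n_{\bb L}\Omega^1_A$, which collapses to $\Omega^n_A$ in degree $0$ by the flatness assertion (Sm1).

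For $Z\Omega^n$ and $B\Omega^n$ I would induct on $n$. The inverse Cartier $C^{-1}:\Omega^n_{-}\to H^n(\Omega^\blob_{-})$ is an iso on free algebras (which are smooth, so the classical Cartier iso applies) and at $A$ by (Sm3); combined with the previous paragraph this shows $H^n(\Omega^\blob_{-})$ is already derived at $A$. Starting from $B\Omega^0=0$ and $Z\Omega^0=H^0(\Omega^\blob_{-})$, one applies Lemma \ref{lemma}(i) alternately to the tautological short exact sequences
\[ 0\to Z\Omega^n\to\Omega^n\xto{d}B\Omega^{n+1}\to0 \quad\text{and}\quad 0\to B\Omega^n\to Z\Omega^n\to H^n(\Omega^\blob_{-})\to0, \]
which are short exact on free algebras by classical de Rham theory and short exact at $A$ by definition, to propagate already-derivedness onward to $B\Omega^{n+1}$ and then to $Z\Omega^{n+1}$.

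The last assertion, for $W_r\Omega^n$, requires a double induction. An auxiliary induction on $r$ first extends the preceding step to the iterated boundaries and cycles: on smooth algebras the Cartier operator $C$ produces short exact sequences
\[ 0\to B\Omega^n\to B_{r+1}\Omega^n\xto{C}B_r\Omega^n\to0 \quad\text{and}\quad 0\to B\Omega^n\to Z_{r+1}\Omega^n\xto{C}Z_r\Omega^n\to0, \]
so Lemma \ref{lemma}(i) upgrades already-derivedness of $B_1$ and $Z_1$ to every $B_r$ and $Z_r$, and thence (via the inclusions $B_r,Z_r\sseq\Omega^n$) to the quotients $\Omega^n/B_r$ and $\Omega^{n-1}/Z_r$. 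Now induct on $r$ starting from $W_1\Omega^n=\Omega^n$: Illusie's analysis exhibits the kernel of the restriction $R:W_{r+1}\Omega^n\to W_r\Omega^n$ as an extension
\[ 0\to\Omega^n_{-}/B_r\Omega^n_{-}\xto{V^r}\ker R\xto{dV^r}\Omega^{n-1}_{-}/Z_r\Omega^{n-1}_{-}\to0, \]
and one application of Lemma \ref{lemma}(i) shows $\ker R$ is already derived at $A$, while a second application to $0\to\ker R\to W_{r+1}\Omega^n\to W_r\Omega^n\to0$ completes the induction step.

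The main obstacle is this last de Rham--Witt step: one has to check that Illusie's structural sequences for $\ker R$ and for $B_r$, $Z_r$ are genuinely short exact both on all finitely generated free $\bb F_p$-algebras (where this follows from his smooth case) and at $A$ itself (which Lemma \ref{lemma}(i) needs in addition). That this last exactness holds is what (Sm3) is there to provide -- the Cartier isomorphism replaces smoothness in Illusie's original proofs -- while (Sm1) and (Sm2) are needed only to get $\Omega^n$ off the ground.
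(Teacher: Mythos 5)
The first three parts of your plan (for $\Omega^n$, $Z\Omega^n$, $B\Omega^n$, and the iterated $B_r\Omega^n$, $Z_r\Omega^n$) are essentially the paper's argument, modulo a cosmetic reorganisation of which short exact sequences carry the induction. The gap is in the final, de Rham--Witt step, and you have actually pointed at it yourself without resolving it: you write that one must check that the Illusie sequence
\[
0\to\Omega^n_{-}/B_r\Omega^n_{-}\xto{V^r}\ker R\xto{dV^r}\Omega^{n-1}_{-}/Z_r\Omega^{n-1}_{-}\to 0
\]
is exact when evaluated at $A$ itself, and then assert ``that this last exactness holds is what (Sm3) is there to provide.'' That assertion is unjustified and is precisely the point the paper flags as the difficulty: for a general $\bb F_p$-algebra this string of maps need not even be a complex (e.g.\ it is not clear a priori that $Z_{r-1}\Omega^{n-1}_A$ is killed by $dV^{r-1}$), and (Sm3) is a statement about the de Rham complex, not about the de Rham--Witt complex; it does not by itself give you exactness of this $W_r\Omega^n$-level sequence at $A$. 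Of course the exactness is true \emph{a posteriori}, once the proposition is established, but using it as an input to Lemma \ref{lemma}(i) is circular.

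The paper's way around this is genuinely different in structure and you should study it: it does not try to verify exactness at $A$ for the de Rham--Witt sequences. Instead it separates the conclusion ``already derived'' into its two halves. Using parts (ii) and (iii) of Lemma \ref{lemma} (which require no exactness at $A$, only injectivity of the first map or vanishing of higher homology of the outer terms) applied to the sequences built from $Z_{r-1}\Omega^{n-1}_-$, $B_{r-1}\Omega^n_-$, $V^{r-1}\Omega^n_-$, $\ker\res R$, etc., one gets that $W_r\Omega^n_-$ is \emph{supported in degree $0$} at $A$. Right exactness, i.e.\ $H_0(\bb L W_r\Omega^n_A)\isoto W_r\Omega^n_A$, is proved by an entirely separate device: one observes that $\bigoplus_n H_0(\bb L W_r\Omega^n_A)$ carries the structure of a Witt complex over $A$, so the universal property of $W_r\Omega^\blob_A$ yields a section $\lambda_A$ of the comparison map, and combining this with the surjectivity coming from a free resolution forces the comparison map to be an isomorphism. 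Your proof would need to either reproduce some such universality argument, or independently establish the exactness of the Illusie sequences at a Cartier smooth $A$ before invoking Lemma \ref{lemma}(i); as written, it does neither.
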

\begin{proof}
The case of $\Omega^n_-$ is classical: since the derived exterior power of an $A$-module may be computed in terms of any flat resolution, the functor $\Omega^n_-$ is already derived at any $\bb F_p$-algebra satsifying (Sm1--2).

We now consider the complexes of functors 
\begin{enumerate}
\item $0\To \Omega^{n}_{-} \xto{C^{-1}} H^{n}(\Omega^\blob_{-})\To 0\To 0$.
\item $0\To Z\Omega^n_{-}\To\Omega^n_{-}\To \Omega^n_{-}/Z\Omega^n_{-}\To 0$.
\item $0\To\Omega^n_{-}/Z\Omega^n_{-}\xTo{d}Z\Omega^{n+1}_{-}\To H^{n+1}(\Omega^\blob_{-})\To 0$.
\end{enumerate}
The first complex is exact (i.e., $C^{-1}$ is an isomorphism) on any $\bb F_p$-algebra satisfying (Sm3), while the second two complexes are obviously exact on any $\bb F_p$-algebra. We now use Lemma \ref{lemma}(i) repeatedly. First it follows that $H^n(\Omega^\blob_{-})$ is already derived at any Cartier smooth $\bb F_p$-algebra, whence the same is true of $Z\Omega^0_{-}=H^0(\Omega^\blob_{-})$. By induction, assuming that $Z\Omega^n_{-}$ is already derived at any Cartier smooth $\bb F_p$-algebra, the second sequence shows that the same is true of $\Omega^n_{-}/Z\Omega^n_{-}$, and then the third sequence shows that the same is true of $Z\Omega^{n+1}_{-}$. We have thus shown that $Z\Omega^n_{-}$ is already derived at any Cartier smooth $\bb F_p$-algebra, for all $n\ge0$.

The obvious cocycle-coboundary short exact sequence (and Lemma \ref{lemma}(i) again) then shows that $B\Omega^n_{-}$ is also already derived at any Cartier smooth $\bb F_p$-algebra, for all $n\ge0$.

Now recall that the inverse Cartier map is used to define the higher coboundaries and cocycles for any $\bb F_p$-algebra $A$, \[0=:B_0\Omega_{A}^n\subseteq B_1\Omega_{A}^n\subseteq\cdots \subseteq Z_1\Omega_{A}^n\subseteq Z_0\Omega_{A}^n:=\Omega_{A}^n,\] inductively as follows, for $r\ge1$:
\begin{itemize}\itemsep0pt
\item $Z_r\Omega_{A}^n$ is the subgroup of $\Omega_{A}^n$ containing $B\Omega^n_{A}$ and satisfying $Z_r\Omega_{A}^n/B\Omega_{A}^n=C^{-1}(Z_{r-1}\Omega_{A}^n)$.
\item $B_r\Omega_{A}^n$ is the subgroup of $\Omega_{A}^n$ containing $B\Omega^n_{A}$ and satisfying $B_r\Omega_{A}^n/B\Omega_{A}^n=C^{-1}(B_{r-1}\Omega_{A}^n)$.
\end{itemize}
(Minor notational warning: $B_1\Omega^n_{A}=B\Omega^n_{A}$, but the inclusion $Z_1\Omega^n_{A}\subseteq Z\Omega^n_{A}$ is strict if the inverse Cartier map is not surjective.) Iterating the inverse Cartier map defines maps \[C^{-r}:\Omega_{A}^n\To Z_r\Omega_{A}^n/B_r\Omega_{A}^n,\] which are (for purely formal reasons) isomorphisms for all $r,n\ge 0$ if $A$ satisfies (Sm3). By using Lemma~\ref{lemma}(i) in the same way as above, a simple induction on $r$ then shows that the functors $Z_r\Omega_{-}^n$ and $B_r\Omega_{-}^n$ are already derived at any Cartier smooth $\bb F_p$-algebra.

We now begin to analyse the de Rham--Witt groups, which are more subtle. We will use the following collection of functors $\bb F_p\op{-algs}\to\op{Ab}$ and maps between then:
\[\xymatrix@R=6mm{
&&&0&&\\
0\ar[r] & Z_{r-1}\Omega^{n-1}_{-}\ar[r]&\Omega^{n-1}_{-}\ar[r]^-{dV^{r-1}}&W_r\Omega^n_{-}/V^{r-1}\Omega^n_{-}\ar[u]\ar[r]^-{\res R}&W_{r-1}\Omega^n_{-}\ar[r] &0\\
&&&W_r\Omega^n_{-}\ar[u]&&\\
&&&\Omega^n_{-}\ar[u]_{V^{r-1}}&&\\
&&&B_{r-1}\Omega^n_{-}\ar[u]&&\\
&&&0\ar[u]&&
}\]
We break this diagram into sequences (not even necessarily complexes; for example, it is not clear if $Z_{r-1}\Omega^{n-1}_{A}$ is killed by $dV^{r-1}$ for arbitrary $A$) of functors as follows:
\begin{enumerate}
\item $0\to Z_{r-1}\Omega^{n-1}_{-}\to\Omega^{n-1}_{-}\to\ker \res R\to 0$.
\item $0\to \ker \res R\to W_r\Omega^n_{-}/V^{r-1}\Omega^n_{-}\xto{\res R}W_{r-1}\Omega^n_{-}\to 0$.
\item $0\to B_{r-1}\Omega^n_{-}\to\Omega^n_{-}\xto{V^{r-1}}V^{r-1}\Omega^n_{-}\to 0$
\item$0\to V^{r-1}\Omega^n_{-}\to W_r\Omega^n_{-}\to W_r\Omega^n_{-}/V^{r-1}\Omega^n_{-}\to 0$.
\end{enumerate}
The row and column in the above diagram (hence sequences (i)--(iv)) are exact upon evaluating on any smooth $\bb F_p$-algebra, by Illusie \cite[(3.8.1) \& (3.8.2)]{Illusie1979}. 

We now claim inductively on $r\ge 1$ that $W_r\Omega^n_A$ is supported in degree $0$ on any Cartier smooth $\bb F_p$-algebra. Note that we have already established stronger statements for $\Omega^n_-$, $Z_r\Omega^n_-$, $B_r\Omega^n_r$ for all $r,n\ge0$. So Lemma \ref{lemma}(ii) and sequence (i) shows that $\ker \res R$ is supported in degree $0$ on any Cartier smooth $\bb F_p$-algebra; the same argument using sequence (iii) shows that the same is true of $V^{r-1}\Omega^n_-$. Then Lemma \ref{lemma}(iii) and sequence (ii) (and the inductive hypothesis) show that  $W_r\Omega^n_{-}/V^{r-1}\Omega^n_{-}$ is supported in degree $0$ on any Cartier smooth $\bb F_p$-algebra; the same argument using sequence (iv) shows that the same is true of $W_r\Omega^-$, completing the inductive proof of the claim.

To finish the proof we will show that $W_r\Omega^n_{-}$ is right exact, i.e., $H_0(\bb LW_r\Omega^n_{A})\isoto W_r\Omega^n_{A}$ for all $\bb F_p$-algebras $A$. We may and do suppose that $A$ is of finite type over $\bb F_p$. The key is to observe that the data \[\bigoplus_{n\ge0}H_0(\bb LW_r\Omega^n_{A})\qquad r\ge1\] forms a Witt complex for $A$. This follows from functoriality of left Kan extension and the fact that $n=0$ part of the above sum is simply $W_r(A)$ (indeed, Lemma \ref{lemma}(i) and the short exact sequences $0\to W_{r-1}\to W_r\to W_1\to 0$ imply inductively that the functors $W_r$ are already derived at $A$). Therefore universality of the de Rham--Witt complex yields natural maps $\lambda_A:W_r\Omega^n_{A}\to H_0(\bb LW_r\Omega^n_{A})$ such that the composition back to $W_r\Omega^n_{A}$ is the identity. Picking a finitely generated free $\bb F_p$-algebra $P$ and surjection $P\onto A$, the naturality provides us with a commutative diagram
\[\xymatrix{
W_r\Omega^n_{P}\ar[d]\ar[r]^-{\lambda_P} &H_0(\bb LW_r\Omega^n_{P})\ar[d]\ar[r]^-\cong &W_r\Omega^n_{P}\ar[d]\\
W_r\Omega^n_{A}\ar[r]^-{\lambda_A} &H_0(\bb LW_r\Omega^n_{A})\ar[r] &W_r\Omega^n_{A}
}\]
The left and right vertical arrows are surjective since $P\to A$ is surjective; the middle vertical arrow is surjective because $H_0(\bb LW_r\Omega^n_{A})$ is a quotient of $W_r\Omega^n_P$ (by definition of $H_0$, since we may as well suppose that $P$ is the $0$-simplices of a simplicial resolution of $A$). The indicated isomorphism holds because $P$ is free. Since both horizontal compositions are the identity, it follows from a diagram chase that all the horizontal maps are isomorphisms. This completes the proof that $W_r\Omega^n_-$ is right exact.
\end{proof}

We next study the usual filtrations on the Rham--Witt complex. Recall that if $A$ is any $\bb F_p$-algebra, then the descending {\em canonical}, {\em $V$-}, and {\em $p$}-{\em filtrations} on $W_r\Omega_A^n$ are defined for $i\ge0$ respectively by
\begin{align*}
\op{Fil}^iW_r\Omega_A^n&:=\ker(W_r\Omega_A^n\xto{R^{r-i}}W_i\Omega_A^n),\\
\op{Fil}^i_VW_r\Omega_A^n&:=V^iW_{r-i}\Omega_A^n+dV^iW_{r-i}\Omega_A^{n-1},\\
\op{Fil}_p^iW_r\Omega_A^n&:=\ker(W_r\Omega_A^n\xto{p^{r-i}}W_r\Omega_A^n).
\end{align*}
Standard de Rham--Witt identities show that \[\op{Fil}^iW_r\Omega_A^n\supseteq \op{Fil}^i_VW_r\Omega_A^n\subseteq \op{Fil}^i_pW_r\Omega_A^n.\] It was proved by Illusie \cite[Prop.~I.3.2 \& I.3.4]{Illusie1979} that these three filtrations coincide if $A$ is a smooth $\bb F_p$-algebra. It was then shown by Hesselholt that the canonical and $V$-filtrations in fact coincide for any $\bb F_p$-algebra (the key observation is that for any fixed $i\ge1$, the groups $W_{r+i}\Omega_A^n/\op{Fil}_V^iW_{r+i}\Omega_A^n$, for $n\ge0$ and $r\ge1$, have an induced structure of a Witt complex over $A$; a detailed proof in the generality of log structures may be found in \cite[Lem.~3.2.4]{Hesselholt2003}). For a general $\bb F_p$-algebra, the $p$- and canonical filtrations need not coincide, but we show that this holds for Cartier smooth algebras:

\begin{proposition}\label{proposition_filtrations}
Let $A$ be a Cartier smooth $\bb F_p$-algebra, and $i,n\ge0$, $r\ge1$.
\begin{enumerate}
\item the functors $\op{Fil}^iW_r\Omega_-^n$, $\op{Fil}_V^iW_r\Omega_-^n$, $\op{Fil}^i_pW_r\Omega_-^n$, $p^iW_r\Omega^n_-$, $W_r\Omega^n_-/p^iW_r\Omega^n_-$ are already derived at~$A$;
\item $\op{Fil}^iW_r\Omega_A^n=\op{Fil}_V^iW_r\Omega_A^n=\op{Fil}^i_pW_r\Omega_A^n$;
\end{enumerate}
\end{proposition}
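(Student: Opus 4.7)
The plan is to establish parts (i) and (ii) simultaneously by propagating the property ``already derived at $A$'' through a web of always-exact short exact sequences of functors, using Lemma~\ref{lemma} and Proposition~\ref{proposition_WOmega_already_derived}. The essential inputs beyond these are Hesselholt's identity $\op{Fil}^i W_r\Omega^n_- = \op{Fil}^i_V W_r\Omega^n_-$ (valid as functors on all $\bb F_p$-algebras, so the $V$-filtration requires no separate treatment) together with Illusie's theorem that $\op{Fil}^i W_r\Omega^n_P = \op{Fil}^i_p W_r\Omega^n_P$ for every smooth $\bb F_p$-algebra $P$, in particular for every finitely generated free one. The latter is the mechanism by which equality of filtrations propagates from free algebras to $A$: the inclusion of functors $\op{Fil}^i W_r\Omega^n_- \hookrightarrow \op{Fil}^i_p W_r\Omega^n_-$ is an isomorphism on finitely generated free $\bb F_p$-algebras, so the two functors have the same left Kan extension; once both are shown to be already derived at $A$, evaluating the common Kan extension at $A$ yields part (ii).

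First I would dispose of $\op{Fil}^i W_r\Omega^n_-$ (and hence $\op{Fil}^i_V W_r\Omega^n_-$) by applying Lemma~\ref{lemma}(i) to the always-exact sequence
\[
0 \to \op{Fil}^i W_r\Omega^n_- \to W_r\Omega^n_- \xto{R^{r-i}} W_i\Omega^n_- \to 0,
\]
whose exactness follows from the always-surjectivity of the restriction; the outer terms are already derived at $A$ by Proposition~\ref{proposition_WOmega_already_derived}. For the remaining three functors of part (i), the plan is to play Lemma~\ref{lemma}(i) against the always-exact sequences
\[
0 \to \op{Fil}^i_p W_r\Omega^n_- \to W_r\Omega^n_- \xto{p^{r-i}} p^{r-i}W_r\Omega^n_- \to 0, \qquad 0 \to p^{r-i}W_r\Omega^n_- \to W_r\Omega^n_- \to W_r\Omega^n_-/p^{r-i}W_r\Omega^n_- \to 0,
\]
bootstrapping: as soon as any one of $\op{Fil}^i_p W_r\Omega^n_-$, $p^{r-i}W_r\Omega^n_-$, $W_r\Omega^n_-/p^{r-i}W_r\Omega^n_-$ is known to be already derived at $A$, the other two follow immediately.

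The main obstacle is therefore the single task of showing, without circularity with part (ii), that one of these three functors is already derived at $A$. The cleanest target is $p^{r-i}W_r\Omega^n_-$, approached via the canonical natural transformation $W_i\Omega^n_- \twoheadrightarrow p^{r-i}W_r\Omega^n_-$ sending $y \mapsto p^{r-i}\tilde{y}$ for any lift $\tilde y \in W_r\Omega^n$, which is well-defined precisely because $\op{Fil}^i \subseteq \op{Fil}^i_p$ holds on every $\bb F_p$-algebra and which Illusie renders an isomorphism on finitely generated free $\bb F_p$-algebras. This identifies the left Kan extensions $\bb L(p^{r-i}W_r\Omega^n_-) \simeq \bb L W_i\Omega^n_-$, so concentration in degree zero at $A$ is immediate from Proposition~\ref{proposition_WOmega_already_derived}. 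For right-exactness at $A$, one mimics the closing universality argument of the proof of Proposition~\ref{proposition_WOmega_already_derived}: one checks that the collection $\{p^j W_{r+j}\Omega^n_A\}_{n\ge0,\,r\ge1}$ carries a natural Witt-complex structure over $A$, producing by universality a natural splitting $W_i\Omega^n_A \to H_0(\bb L(p^{r-i}W_r\Omega^n_-)(A))$ whose composition with the canonical projection to $p^{r-i}W_r\Omega^n_A$ is the identity. A diagram chase as in Proposition~\ref{proposition_WOmega_already_derived} then forces all the maps involved to be isomorphisms, establishing already-derivedness. Part (i) follows by the bootstrap above, and part (ii) by the Kan-extension comparison; the delicate point throughout is to route the Witt-complex argument through $W_i\Omega^n_-$ without invoking the filtration equality we are trying to prove.
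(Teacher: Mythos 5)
Your reduction of part (i) to establishing the derivedness of any one of $\op{Fil}^i_pW_r\Omega_-^n$, $p^{r-i}W_r\Omega^n_-$, $W_r\Omega^n_-/p^{r-i}$ is fine, as is the observation that the natural transformation $W_i\Omega^n_-\to p^{r-i}W_r\Omega^n_-$, $y\mapsto p^{r-i}\tilde y$, is well-defined on all $\bb F_p$-algebras and an isomorphism on smooth ones, so that $\bb L(p^{r-i}W_r\Omega^n_-)\simeq\bb LW_i\Omega^n_-$; concentration in degree $0$ at $A$ then follows from Proposition~\ref{proposition_WOmega_already_derived}. The gap is in the right-exactness step, which is where all the content of the proposition lives.

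The heart of the proposition is precisely the inclusion $\op{Fil}^i_pW_r\Omega^n_A\subseteq\op{Fil}^iW_r\Omega^n_A$, i.e.\ the \emph{injectivity} of the natural surjection $W_i\Omega^n_A\onto p^{r-i}W_r\Omega^n_A$, and the proposed universality argument does not reach it. In the closing argument of Proposition~\ref{proposition_WOmega_already_derived}, the point is that $\bigoplus_n H_0(\bb LW_r\Omega^n_-(A))$ is a Witt complex over $A$, so universality furnishes $\lambda_A:W_r\Omega^n_A\to H_0(\bb LW_r\Omega^n_A)$, and the composite $c\circ\lambda_A:W_r\Omega^n_A\to W_r\Omega^n_A$ is a Witt-complex endomorphism of the \emph{initial} object, hence the identity. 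That is what makes $\lambda_A$ split injective. In your setting the analogous composite $W_i\Omega^n_A\xto{\lambda}H_0(\bb L(p^{r-i}W_r\Omega^n_-)(A))\xto{c}p^{r-i}W_r\Omega^n_A$ does \emph{not} land back in the initial object; it lands in the a~priori different Witt complex $W_r\Omega^\blob_A/\op{Fil}^i_p$ (note that $p^{r-i}W_r\Omega^\blob_A$ itself has no unit and so is not a Witt complex as written — you want this quotient). Initiality gives uniqueness of such a map, not injectivity, and under the identification $H_0\cong W_i\Omega^n_A$ the composite is exactly the surjection whose injectivity is to be proved. Nothing in the argument makes it ``the identity''; the diagram chase with $P\onto A$ only recovers surjectivity, which was already clear. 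So the argument is circular.

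What is missing is the actual mechanism by which Cartier smoothness forces $\op{Fil}^i_p\subseteq\op{Fil}^i$. The paper obtains it by left Kan extending Illusie's short exact sequences \cite[(3.10.4)]{Illusie1979} and the map $\ul p$ relating $\op{Fil}^{r-1}W_r\Omega^n$ to $\op{Fil}^rW_{r+1}\Omega^n$: the flanking vertical maps of that ladder are inverse Cartier maps on $\Omega^n/B_\bullet$ and $\Omega^{n-1}/Z_\bullet$, which are injective under (Sm3), whence $\ul p$ is injective and its defining property shows $p$-torsion in $\op{Fil}^{r-1}W_{r+1}\Omega^n_A$ is killed by $R$; an induction on $i$ then gives $(\dag)$. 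Some input of this shape is unavoidable, since $\op{Fil}^i=\op{Fil}^i_p$ fails for general $\bb F_p$-algebras, and your proposal never invokes (Sm3) in the crucial step.
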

\begin{proof}
We begin by reducing all assertions to the inclusion \[\op{Fil}^iW_r\Omega_A^n\supseteq \op{Fil}^i_pW_r\Omega_A^n\tag{\dag},\] so assume for a moment that this has been established.

Note first (even without (\dag)) from Proposition \ref{proposition_WOmega_already_derived} and Lemma \ref{lemma}(i) that $\Fil^iW_{r}\Omega^n_-=\ker(R^{r-i}:W_r\Omega_-^n\onto W_i\Omega_-^n)$ is already derived at $A$. Moreover, as explained before the proposition, one has $\op{Fil}^iW_r\Omega_-^n=\op{Fil}^i_VW_r\Omega_-^n\subseteq \op{Fil}^i_pW_r\Omega_-^n$, with equality in the case of smooth $\bb F_p$-algebras, so left Kan extension and (\dag) give part (ii) and imply that $\op{Fil}^iW_r\Omega_-^n$ and $\op{Fil}^i_pW_r\Omega_-^n$ are already derived at $A$. Then applying Lemma \ref{lemma}(i) to the functors $\Fil^{r-i}_pW_r\Omega^n_-\to W_r\Omega^n_-\xto{p^i}p^iW_r\Omega^{n}_-$ (the first two terms of which have already been shown to be already derived at $A$) shows that $p^iW_r\Omega^{n}_-$ is already derived at $A$, and then applying the same argument to the sequence $p^iW_r\Omega^{n}_-\to W_r\Omega^n_-\to W_r\Omega^{n}_-/p^iW_r\Omega^{n}_-$ shows that $W_r\Omega^{n}_-/p^iW_r\Omega^{n}_-$ is also already derived at $A$.

It remains to prove (\dag). For any smooth $\bb F_p$-algebra $R$ there is a natural map of short exact sequences
\[\xymatrix{
0\ar[r] & \Omega^n_{R}/B_{r-1}\Omega^n_{R}\ar[r]^-{V^{r-1}}\ar[d]_{C^{-1}} & \op{Fil}^{r-1}W_{r}\Omega_R^n\ar[r]\ar[d]_{\ul p}\ar[r]^-{\beta}& \Omega^{n-1}_{R}/Z_{r-1}\Omega^{n-1}_{R}\ar[r]\ar[d]_{C^{-1}} & 0\\
0\ar[r] & \Omega^n_{R}/B_{r}\Omega^n_{R}\ar[r]^-{V^{r}} & \op{Fil}^{r}W_{r+1}\Omega_R^n\ar[r]^-{\beta}& \Omega^{n-1}_{R}/Z_{r}\Omega^{n-1}_{R}\ar[r] & 0
}\]
\cite[(3.10.4)]{Illusie1979} in which $\ul p$ is characterised by the property that the composition \[\op{Fil}^{r-1}W_{r+1}\Omega_A^n\xto{R} \op{Fil}^{r-1}W_{r}\Omega_A^n\xto{\ul p}\op{Fil}^{r}W_{r+1}\Omega_A^n\subseteq \op{Fil}^{r-1}W_{r+1}\Omega_A^n\] is multiplication by $p$. (The definition of $\beta$ is unimportant but given in the top row by $\beta(V^{r-1}x+dV^{r-1}y)=y$, for $x\in\Omega^{n}_A$ and $y\in\Omega^{n-1}_A$; for $\beta$ in the bottom row replace $r-1$ by $r$.)

Upon left Kan extending and evaluating at $A$, we obtain the same map of short exact sequences but with $R$ replaced by $A$: here we use that $\Omega^n_{-}$, $B_{r}\Omega^n_{-}$, $Z_{r}\Omega^n_{-}$, and $\Fil^iW_{r}\Omega^n_-$ are already known to be derived at $A$. But the inverse Cartier maps \[C^{-1}:\Omega^n_{A}/B_{r-1}\Omega^n_A\To \Omega^n_{A}/B_r\Omega^n_A\qquad C^{-1}:\Omega^{n-1}_{A}/Z_{r-1}\Omega^{n-1}_A\To \Omega^{n-1}_{A}/Z_r\Omega^{n-1}_A\] are injective: this follows formally from the injectivity of $C^{-1}:\Omega^n_A\to\Omega^n_A/B\Omega^n_A$ (similarly for $n-1$) and the surjectivity (by definition of the higher coboundaries and cocycles) of $C^{-1}:B_{r-1}\Omega^n_A\to B_{r}\Omega^n_A/B\Omega^n_A$ and $C^{-1}:Z_{r-1}\Omega^{n-1}_A\to Z_{r}\Omega^{n-1}_A/B\Omega^{n-1}_A$. Therefore the middle vertical arrow $\ul p:\op{Fil}^{r-1}W_{r}\Omega_A^n\to \op{Fil}^{r}W_{r+1}\Omega_A^n$ is also injective and so, from the characterisation of $\ul p$, we deduce that \[\ker(\op{Fil}^{r-1}W_{r+1}\Omega_A^n\xto{p} \op{Fil}^{r-1}W_{r+1}\Omega_A^n)\subseteq \ker(\op{Fil}^{r-1}W_{r+1}\Omega_A^n\xto{R} \op{Fil}^{r-1}W_{r}\Omega_A^n)\,\left(=\op{Fil}^{r}W_{r+1}\Omega^n_A\right).\] 

It now follows easily by induction that $\Fil^{r}_pW_{r+1}\Omega^n_A\subseteq \op{Fil}^{r}W_{r+1}\Omega^n_A$. Indeed, this is vacuous if $r=0$, while if $\omega\in\op{Fil}^r_pW_{r+1}\Omega^n_A$ then $R\omega\in \op{Fil}^{r-1}_pW_{r}\Omega^n_A\subseteq \op{Fil}^{r-1}W_{r}\Omega^n_A$ (the inclusion holds by the inductive hypothesis) and so $\omega$ belongs to $\op{Fil}^{r-1}W_{r+1}\Omega^n_A$ and is killed by $p$, whence the previous equation shows that $\omega\in\op{Fil}^rW_{r+1}\Omega^n_A$, as desired.

Replacing $r$ by $r-1$ to simplify notation, we have shown that $\Fil^{r-i}_pW_{r}\Omega^n_A\subseteq \op{Fil}^{r-i}W_{r}\Omega^n_A$ for $i=1$. This easily extends to all $i\ge1$ by induction (which completes the proof): indeed, if $\omega\in \Fil^{r-i}_pW_{r}\Omega^n_A$ then $p^{i-1}\in\Fil^{r-1}_pW_r\Omega^n_A$, whence $R(p^{i-1}\omega)=0$ by the case $i=0$; in other words, $R\omega\in\op{Fil}^{r-1-(i-1)}_pW_{r-1}\Omega^n_A$ and so the inductive hypothesis implies $R\omega\in\op{Fil}^{r-1-(i-1)}W_{r-1}\Omega^n_A$, i.e., $R^i\omega=0$, as required.
\end{proof}

\begin{corollary}\label{corollary_de_Rham_quis}
Let $A$ be a Cartier smooth $\bb F_p$-algebra. The canonical maps \[[0\to W_r\Omega^1_A/p\xto{d}\cdots \xto{d}W_r\Omega^{n-1}_A/p\xto{d}W_r\Omega^n_A/VW_{r-1}\Omega^n_A\xto{d}0]\To \Omega_A^{\le n}\] and \[W_r\Omega_A^\blob/p\to \Omega^\blob_A\] are quasi-isomorphisms.
\end{corollary}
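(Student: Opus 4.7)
The strategy is to reduce both quasi-isomorphisms to the classical case of smooth $\bb F_p$-algebras via left Kan extension. For a smooth $\bb F_p$-algebra $R$, the second quasi-isomorphism $W_r\Omega^\bullet_R/p \simeq \Omega^\bullet_R$ is a standard Illusie-type computation, obtained by analysing the successive graded pieces of the canonical filtration on $W_r\Omega^\bullet_R/p$ via the Cartier isomorphism. The first quasi-isomorphism is the truncation of this at degree $n$, with the modified top term $W_r\Omega^n_R/VW_{r-1}\Omega^n_R$ chosen so that the $d$-differential from degree $n-1$ is well defined and the resulting complex maps quasi-isomorphically onto $\Omega^{\le n}_R$.

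Granted the smooth case, it suffices to check that every term appearing in the source complexes is a functor $\bb F_p\op{-algs}\to\op{Ab}$ commuting with filtered colimits and already derived at $A$: for then termwise $\bb L(-)(A)\simeq(-)(A)$, and left Kan extension preserves quasi-isomorphisms of bounded complexes, so the classical quasi-isomorphism descends to the one at $A$. The already-derivedness of $\Omega^n_-$ and $W_r\Omega^n_-$ is Proposition~\ref{proposition_WOmega_already_derived}, while that of $W_r\Omega^n_-/p = W_r\Omega^n_-/pW_r\Omega^n_-$ is Proposition~\ref{proposition_filtrations}(i).

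The only remaining case is $W_r\Omega^n_-/VW_{r-1}\Omega^n_-$. For this I would apply Lemma~\ref{lemma}(i) to the short exact sequence
\[0\to VW_{r-1}\Omega^n_-\to W_r\Omega^n_-\to W_r\Omega^n_-/VW_{r-1}\Omega^n_-\to 0,\]
which is exact on smooth inputs. The middle term is already derived by Proposition~\ref{proposition_WOmega_already_derived}, and the left term is handled by identifying $VW_{r-1}\Omega^n_-$ with $W_{r-1}\Omega^n_-$ via the standard injectivity of $V$ on the de Rham--Witt complex (an isomorphism of functors on smooth algebras), combined with an induction on $r$ and a further application of Proposition~\ref{proposition_WOmega_already_derived}. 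Lemma~\ref{lemma}(i) then delivers the derivedness of the quotient.

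The main technical obstacle is securing the injectivity of $V:W_{r-1}\Omega^n_-\to W_r\Omega^n_-$ in the Cartier smooth setting and checking carefully that the source complex in degree $n$ is well defined (i.e.\ that $d$ carries $pW_r\Omega^{n-1}_A$ into $VW_{r-1}\Omega^n_A$, using $p=FV$ and the relation $FdV=d$). Granted these classical de Rham--Witt identities, the remainder is a routine bookkeeping of the preceding propositions and the left Kan extension formalism.
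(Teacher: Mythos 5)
Your overall plan — reduce to the smooth case via left Kan extension, using that the relevant functors are already derived at $A$, and handle each term of the source complex separately — is the same as the paper's. The treatment of $\Omega^n_-$, $W_r\Omega^n_-$, and $W_r\Omega^n_-/p$ is correct. But the step you flag as the "main technical obstacle" is in fact a genuine flaw: $V\colon W_{r-1}\Omega^n_R\to W_r\Omega^n_R$ is \emph{not} injective for smooth $\bb F_p$-algebras when $n\ge 1$, so the identification $VW_{r-1}\Omega^n_-\cong W_{r-1}\Omega^n_-$ that you want to make does not hold. Concretely, for any $b$ one has $V(db) = p\,dVb$, and since $dVb$ lies in $\op{Fil}^{r-1}W_r\Omega^n$ (which is killed by $p$ by Proposition \ref{proposition_filtrations}(ii), or Illusie's Prop.\ I.3.2/I.3.4 in the smooth case) this vanishes, while $db$ itself is generally nonzero; already $V\colon\Omega^1_{\bb F_p[t]}\to W_2\Omega^1_{\bb F_p[t]}$ kills $dt$. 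Indeed, the kernel of $V^{r-1}\colon\Omega^n_R\to W_r\Omega^n_R$ is $B_{r-1}\Omega^n_R$ (this appears as sequence (iii) in the paper's proof of Proposition \ref{proposition_WOmega_already_derived}), which is nonzero for $n\ge 1$.

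The paper sidesteps this entirely by working with the \emph{cokernel} of $V$ rather than its image: Illusie's exact sequence $W_r\Omega^n_-\xto{V}W_{r+1}\Omega^n_-\xto{F^r}Z_r\Omega^n_-\to 0$ identifies $W_{r+1}\Omega^n_-/VW_r\Omega^n_-$ with $Z_r\Omega^n_-$ as functors on smooth algebras. Since $Z_r\Omega^n_-$ is already derived at $A$ (Proposition \ref{proposition_WOmega_already_derived}), and all three terms are already derived so the sequence remains exact at $A$ by right exactness of left Kan extension, this immediately yields that $W_r\Omega^n_-/VW_{r-1}\Omega^n_-$ is already derived at $A$. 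No statement about $\ker(V)$ is required. If you want to salvage your decomposition, you would need to replace "$VW_{r-1}\Omega^n_-\cong W_{r-1}\Omega^n_-$" with "$W_r\Omega^n_-/VW_{r-1}\Omega^n_-\cong Z_{r-1}\Omega^n_-$", which is exactly the paper's move.
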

\begin{proof}
The sequence of functors \[W_r\Omega^n_-\xto{V} W_{r+1}\Omega^n_-\xto{F^r}Z_r\Omega^n_-\To 0\] is exact when evaluated at any smooth $\bb F_p$-algebra \cite[(3.11.3)]{Illusie1979}. Since the three functors are already derived at $A$ by Proposition \ref{proposition_WOmega_already_derived}, left Kan extension shows that the sequence is also exact when evaluated at $A$. (Here we implicitly use right exactness of left Kan extension: if $\cal F\to \cal G$ is a map of functors $\bb F_p\op{-algs}\to\op{Ab}$ such that $\cal F(R)\to \cal G(R)$ is surjective for all free $R$, then $H_0(\bb L\cal F(B))\to H_0(\bb L\cal G(B))$ is surjective for all $B\in\bb F_p\op{-algs}$.)

It follows that the functor $W_r\Omega^n_-/VW_{r-1}\Omega^n_-$ is already derived at $A$. Since the functors $W_r\Omega^i_-/p$ and $\Omega^i_-$ are also already derived at $A$ (by Proposition \ref{proposition_filtrations}(i) and Proposition \ref{proposition_WOmega_already_derived} respectively), the quasi-isomorphisms follow by left Kan extension from the case of finitely generated free $\bb F_p$-algebras, namely \cite[Corol.~3.20]{Illusie1979}.
\end{proof}

\begin{theorem}\label{theorem_eta}
Let $A$ be a Cartier smooth $\bb F_p$-algebra. Then
\begin{enumerate}
\item $W\Omega_A^n:=\projlim_rW_r\Omega_A^n$ is $p$-torsion-free.
\item $F^iW\Omega^n_A=d^{-1}(p^iW\Omega^n_A)$ for all $i\ge1$.
\end{enumerate}
\end{theorem}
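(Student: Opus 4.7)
The plan for part~(i) is to work componentwise in the inverse limit $W\Omega^n_A = \projlim_r W_r\Omega^n_A$. Given $\omega = (\omega_r)_{r\ge 1}$ with $p\omega = 0$, each $\omega_r$ is annihilated by $p$, hence lies in $\op{Fil}^{r-1}_p W_r\Omega^n_A$. By Proposition~\ref{proposition_filtrations}(ii) this subgroup coincides with $\op{Fil}^{r-1} W_r\Omega^n_A = \ker(R: W_r\Omega^n_A \to W_{r-1}\Omega^n_A)$, so $\omega_{r-1} = R\omega_r = 0$ for every $r\ge 2$, giving $\omega=0$. An immediate consequence, which will be useful for part~(ii), is that each iterate $F^i: W\Omega^n_A \to W\Omega^n_A$ is injective: the universal DRW relation $VF = p$ (valid in any $\bb F_p$-algebra) gives $V^iF^i = p^i$, so $F^ix = 0$ implies $p^ix = 0$, whence $x=0$ by part~(i).

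For the inclusion $F^iW\Omega^n_A \subseteq d^{-1}(p^iW\Omega^{n+1}_A)$ in part~(ii), I invoke the universal identity $dF = pFd$, a formal consequence of $FV = p$ and $FdV = d$ that holds in the DRW complex of any $\bb F_p$-algebra. Iterating gives $dF^i = p^i F^i d$, so for $x = F^i y$ one has $dx = p^i F^i dy \in p^iW\Omega^{n+1}_A$.

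The reverse inclusion is the essential content, and here the plan is to adapt Illusie's proof of the analogous statement for smooth $\bb F_p$-algebras \cite[I.3.21.1.5]{Illusie1979} to the Cartier smooth setting. All the structural ingredients of that proof -- the DRW functors $W_r\Omega^n_-$, their canonical, $V$-, and $p$-filtrations, the subquotients $p^iW_r\Omega^n_-$ and $W_r\Omega^n_-/p^iW_r\Omega^n_-$, and the associated higher cocycle/coboundary subfunctors $Z_r\Omega^n_-$, $B_r\Omega^n_-$ -- are already derived at $A$ by Propositions~\ref{proposition_WOmega_already_derived} and~\ref{proposition_filtrations}, while the three filtrations coincide by Proposition~\ref{proposition_filtrations}(ii). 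Each exact sequence appearing in the smooth-case argument can therefore be transported to $A$ via left Kan extension, giving the corresponding finite-level claim for $A$.

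The main obstacle is the passage from finite levels to the inverse limit, since left Kan extension interacts poorly with inverse limits: the delicate step is assembling a coherent family of lifts across all $r$. The injectivity of $F^i$ on $W\Omega^n_A$ from part~(i) provides uniqueness of lifts, which controls the resulting Mittag-Leffler obstruction; concretely, given $x = (x_r) \in W\Omega^n_A$ with $dx \in p^iW\Omega^{n+1}_A$, the finite-level version produces at each $r$ an element $z_r \in W_{r+i}\Omega^n_A$ with $F^i z_r = x_r$, and uniqueness of lifts under $F^i$ ensures these assemble (after possibly modifying by elements in the kernel of $F^i$, which is controlled by the same filtration analysis) to a compatible family $z \in W\Omega^n_A$ with $F^iz = x$.
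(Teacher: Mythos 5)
Part~(i) and the easy inclusion $F^iW\Omega^n_A\subseteq d^{-1}(p^iW\Omega^{n+1}_A)$ are fine and match the paper: the componentwise argument you give is exactly what ``follows formally from Proposition~\ref{proposition_filtrations}(ii)'' means, and $dF=pFd$ gives the forward direction.

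The sketch of the reverse inclusion, however, has two genuine gaps. First, the mechanism by which the finite-level lifts are assembled is misattributed. You invoke injectivity of $F^i$ on the inverse limit $W\Omega^n_A$ to ``control the Mittag--Leffler obstruction'', but injectivity on the limit only gives uniqueness of a lift \emph{if} a compatible family already exists; it does not produce one. What is actually needed is a statement at finite levels: the kernel of $F\colon W_{r+1}\Omega^n_A\to W_r\Omega^n_A$ is $p$-torsion (because $VF=p$), hence killed by $R$ by Proposition~\ref{proposition_filtrations}(ii), so the pro-system of kernels is pro-zero. That is exactly what makes the sequence of pro-abelian groups $0\to\{W_r\Omega^n_A\}\xto{F}\{W_r\Omega^n_A\}\xto{d}\{W_r\Omega^{n+1}_A/p\}$ exact and allows passage to the inverse limit. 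Your parenthetical ``(after possibly modifying by elements in the kernel of $F^i$, which is controlled by the same filtration analysis)'' gestures at the right fact, but it is doing all the work and should be the main argument, not an aside; as written, the displayed reasoning points to the wrong ingredient.

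Second, you try to handle general $i$ directly at finite levels (``produces at each $r$ an element $z_r\in W_{r+i}\Omega^n_A$ with $F^iz_r=x_r$''). No such finite-level statement for $i>1$ has been established, and it is not an obvious transport of Illusie. The paper instead proves the finite-level claim only for $i=1$, by an explicit induction on $r$: writing $x=Fz+Vz_1+dVz_2$ using $\Fil^1W_r\Omega^n_A=\Fil^1_VW_r\Omega^n_A$, applying $Fd$ to deduce $dz_1\in pW_{r-1}\Omega^{n+1}_A$, invoking the inductive hypothesis to get $z_1=Fz_3$, and concluding $x\in FW_{r+1}\Omega^n_A$. It then passes to the inverse limit for $i=1$ as above, and finally handles general $i$ by induction on $i$ \emph{at the level of $W\Omega^n_A$} using part~(i). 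Your proposal should either reproduce this structure or supply a finite-level argument for general $i$, which it currently does not.
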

\begin{proof}
Part (i) follows formally from the equality of the canonical and $p$-filtrations in Proposition \ref{proposition_filtrations}(ii).

For (ii), let us first fix a finite level $r$ and show that an element $x\in W_r\Omega^n_A$ is in the image of $F:W_{r+1}\Omega^n_A\to W_r\Omega^n_A$ if and only if $dx\in pW_r\Omega^{n+1}_A$. The implication ``only if'' following from the identity $dF=pFd$, so we prove ``if'' by induction on $r$; assume the result known at level $r-1$ and suppose $dx\in pW_r\Omega^{n+1}_A$. Then $dR^{r-1}\tilde x=0$, whence $\tilde x\in FW_2\Omega^n_A$ (by the $r=1$ case of the right exact sequence from the start of proof of Corollary \ref{corollary_de_Rham_quis}; this also proves the base case of the induction). Lifting to level $r$ and recalling that $\Fil^1W_{r}\Omega^n_A=\Fil^1_VW_{1}\Omega^n_A$ allows us to write $x=Fz+Vz_1+dVz_2$ for some $z\in W_{r+1}\Omega^n_A$, $z_1\in W_{r-1}\Omega^n_A$, $z_2\in W_{r-1}\Omega^{n-1}_A$.

Applying $Fd$ to the previous identity yields $dz_1=FdVz_1=Fdx-pF^2dz\in pW_{r-1}\Omega^{n+1}_A$, whence the inductive hypothesis allows us to write $z_1=Fz_3$ for some $z_3\in W_r\Omega^n_A$. In conclusion, \[x=Fz+VFz_1+dVz_2=Fz+FVz_3+FdV^2z_2\in FW_{r+1}\Omega^n_A,\] as required to complete the induction.

We now prove (ii); it suffices to treat the case $i=1$ as the general case then follows by induction using part (i). The identity $VF=p$ shows that the kernel of $F:W_{r+1}\Omega^n_A\to W_{r}\Omega^n_A$ is killed by $p$, hence is killed by $R$ by Proposition \ref{proposition_filtrations}(ii). So, using also the finite level case of the previous paragraphs, the sequence of pro abelian groups \[0\To\{W_r\Omega^n_A\}_r\xTo{F} \{W_r\Omega^n_A\}_r\xTo{d} \{W_r\Omega^{n+1}_A/pW_r\Omega^{n+1}_A\}_r\] is exact, and taking the limit yields the short exact sequence \[0\To W\Omega^n_A\xTo{F}W\Omega^n_A\xTo{d}\projlim_r W_r\Omega^{n+1}_A/pW_r\Omega^{n+1}_A.\tag{\dag}\] But the sequence of pro abelian groups \[0\To\{W_r\Omega^{n+1}_A\}_r\xTo{p} \{W_r\Omega^{n+1}_A\}_r\To \{W_r\Omega^{n+1}_A/pW_r\Omega^{n+1}_A\}_r\] is also exact (again since $p$-torsion is killed by $R$), and so taking the limit shows that the final term in (\dag) is $W\Omega^{n+1}_A/pW\Omega^{n+1}_A$, as required to complete the proof.
\end{proof}

\begin{remark}
In terms of the functor $\eta_p$ \cite[\S6]{Bhatt_Morrow_Scholze2}, the previous theorem implies that $\phi:W\Omega^\blob_A\to\eta_pW\Omega^\blob_A$ is an isomorphism and hence provides a quasi-isomorphism $\phi:W\Omega^\blob_A\quis L\eta_pW\Omega^\blob_A$.
\end{remark}

\subsection{The dlog forms of a Cartier smooth algebra}\label{ss_dlog}
Recall from Remark \ref{remark_dlog} that, for any $\bb F_p$-algebra $A$, we have defined the subgroup of dlog forms $W_r\Omega^n_{A,\sub{log}}\subseteq W_r\Omega^n_{A}$. Alternatively, standard de Rham--Witt identifies imply the existence of a unique map $\res F:W_r\Omega_A^n\to W_r\Omega_A^n/dV^{r-1}\Omega_A^{n-1}$ making the following diagram commute (in which $\pi$ denotes the canonical quotient map)
\[\xymatrix@C=1.5cm{
W_{r+1}\Omega_A^n\ar[d]_R\ar[r]^F & W_r\Omega_A^n\ar[d]^\pi\\
W_r\Omega_A^n\ar[r]^{\res F} & W_r\Omega_A^n/dV^{r-1}\Omega_A^{n-1}
}\]
and it may then be shown that \[W_r\Omega_{A,\sub{log}}^n= \ker(W_r\Omega_A^n\xto{\res F-\pi}W_r\Omega_A^n/dV^{r-1}\Omega_A^{n-1}).\] See \cite[Corol.~4.2]{Morrow_pro_GL} for further details.

The following argument is modelled on the proof in special case when $r=1$ and $A$ is smooth \cite[Prop.~2.26]{ClausenMathewMorrow}:

\begin{lemma}\label{lemma_dlog}
Let $A$ be a Cartier smooth $\bb F_p$-algebra. Then the square
\[\xymatrix{
W\Omega^n_{A}/p^r\ar[r]^{F-1}\ar[d] & W\Omega^n_{A}/p^r\ar[d]\\
W_r\Omega^n_{A}\ar[r]^-{\res F-\pi} & W_r\Omega^n_{A}/dV^{r-1}\Omega^n_{A}
}\]
is bicartesian.
\end{lemma}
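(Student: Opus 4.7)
The square is bicartesian if and only if the total sequence
\[
0 \to W\Omega^n_A/p^r \xrightarrow{(F-1,\op{can})} W\Omega^n_A/p^r \oplus W_r\Omega^n_A \xrightarrow{\op{can} - (\res F - \pi)} W_r\Omega^n_A/dV^{r-1}\Omega^{n-1}_A \to 0
\]
is exact. Writing $\pi_1$ and $\pi_2$ for the left and right vertical reductions, surjectivity of the final map follows from surjectivity of $\pi_2$ (ultimately from surjectivity of $W\Omega^n_A\onto W_r\Omega^n_A$), the complex condition follows from the defining identity $\res F\circ R=\pi\circ F$, and a straightforward diagram chase reduces the remaining content to showing that $F-1$ induces an isomorphism $\ker\pi_1\isoto\ker\pi_2$. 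One computes
\[
\ker\pi_1=(V^rW\Omega^n_A+dV^rW\Omega^{n-1}_A)/p^rW\Omega^n_A,\quad \ker\pi_2=(V^rW\Omega^n_A+dV^{r-1}W\Omega^{n-1}_A)/p^rW\Omega^n_A,
\]
and, using the de Rham--Witt identities $FV=VF=p$ and $FdV=d$, one checks $(F-1)(V^ra+dV^rb)=V^r(F-1)a+dV^{r-1}(1-V)b$, which indeed lies in $\ker\pi_2$.

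For surjectivity, given a representative $V^r\alpha+dV^{r-1}\beta\in\ker\pi_2$, I would take $a:=-\sum_{i=0}^{r-1}F^i\alpha$ (so that $(F-1)a=\alpha-F^r\alpha$) and $b':=\sum_{i\ge 0}V^i\beta$. The latter defines a well-defined element of $W\Omega^{n-1}_A=\projlim_s W_s\Omega^{n-1}_A$ because only finitely many (at most $s$) terms contribute modulo each $W_s\Omega^{n-1}_A$, and a telescoping argument yields $(1-V)b'=\beta$. Then
\[
(F-1)(V^ra+dV^rb')=V^r(\alpha-F^r\alpha)+dV^{r-1}\beta\equiv V^r\alpha+dV^{r-1}\beta\pmod{p^rW\Omega^n_A}
\]
using $V^rF^r=p^r$, and $V^ra+dV^rb'\in\ker\pi_1$, as desired.

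For injectivity, suppose $V^r(F-1)a+dV^{r-1}(1-V)b\in p^rW\Omega^n_A$. Applying $F^r$ and using $F^rV^r=p^r$, $F^rdV^{r-1}=Fd$, and the identity $dF=pFd$ (interpreted via the $p$-torsion-freeness of $W\Omega^n_A$ from Theorem~\ref{theorem_eta}(i)), one obtains $p^r(F-1)a+(F-1)db\in p^rW\Omega^n_A$, so $(F-1)db\in p^rW\Omega^n_A$, which rearranges to $d(Fb-pb)\in p^{r+1}W\Omega^{n+1}_A$. Theorem~\ref{theorem_eta}(ii) then gives $Fb-pb\in F^{r+1}W\Omega^{n-1}_A$; combining with $pb=F(Vb)$ and the injectivity of $F$ on the $p$-torsion-free group $W\Omega^{n-1}_A$ (also from Theorem~\ref{theorem_eta}(i)), I deduce $b-Vb\in F^rW\Omega^{n-1}_A$. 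The same convergent-series trick as in the surjectivity step then writes $b=F^r\sum_{i\ge 0}V^i\zeta\in F^rW\Omega^{n-1}_A$, whence $dV^rb\in p^rW\Omega^n_A$. Substituting back forces $V^r(F-1)a\in p^rW\Omega^n_A$, so $(F-1)a\in F^rW\Omega^n_A$ by injectivity of $V^r$, and an $r$-fold iteration using $F$-injectivity yields $a\in F^rW\Omega^n_A$. Therefore $V^ra+dV^rb\in p^rW\Omega^n_A$, proving injectivity.

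The main obstacle is the injectivity step: sharpening the naive bound $b-Vb\in F^{r-1}W\Omega^{n-1}_A$ (which one gets for free by applying $F^{r-1}$ to the hypothesis) to the required $b-Vb\in F^rW\Omega^{n-1}_A$ is exactly where the extra factor of $p$ furnished by the identity $dF=pFd$ is needed, and this is the place where the Cartier-smooth hypothesis---through Theorem~\ref{theorem_eta}---does real work.
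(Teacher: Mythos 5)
Your proof is correct and follows the same overall strategy as the paper: reduce to showing that $F-1$ carries the kernel of the left vertical surjection isomorphically onto the kernel of the right one, identify both kernels explicitly using the $V$-filtration, and then verify the resulting statement by de Rham--Witt manipulations whose ultimate input is Theorem~\ref{theorem_eta} (in particular the $p$-torsion-freeness and the characterisation $F^iW\Omega^n_A=d^{-1}(p^iW\Omega^n_A)$, together with the geometric-series inversion of $1-V$). The surjectivity steps are essentially identical. Your injectivity argument, however, is organised differently from the paper's: you apply $F^r$ to the hypothesis to isolate the $d$-part first (obtaining $(F-1)db\in p^rW\Omega^n_A$, then pushing through $dF=pFd$ and Theorem~\ref{theorem_eta}(ii) to conclude $b\in F^rW\Omega^{n-1}_A$), and only then handle the $V^r$-part via $V$-injectivity followed by an $r$-fold $F$-injectivity iteration. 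The paper instead applies $1+F+\cdots+F^{r-1}$ to pass to $F^r-1$, takes $d$ to kill the $dV^r z$-term and read off $y\in F^rW\Omega^n_A$ directly, and then treats $z$ via the automorphism $1-V^r$; this avoids both the factoring-out-$p$ step and the iterative argument for $a$, so it is somewhat shorter, but the two proofs extract the same information from Theorem~\ref{theorem_eta}. One small slip to fix: after $(F-1)db\in p^rW\Omega^n_A$ you should obtain $d(Fb-pb)\in p^{r+1}W\Omega^n_A$, not $p^{r+1}W\Omega^{n+1}_A$.
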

\begin{proof}
Since the vertical arrows are surjective, we must show that their kernels are isomorphic, i.e, that \[F-1:\op{Im}(V^rW\Omega^n_A+dV^rW\Omega^n_A\to W\Omega^n_A/p^r)\Isoto \op{Im}(V^rW\Omega^n_A+dV^{r-1}\Omega^n\to W\Omega^n_A/p^r).\] Surjectivity follows from surjectivity modulo $p$, which is a consequence of the identities in $W\Omega^n_A$ \[(F-1)V\equiv -V\mod{p},\qquad (F-1)d\sum_{i>0}V^i=d.\] It remains to establish injectivity, so suppose that $x\in W\Omega^n_A$ is an element of the form $x=V^ry+dV^rz$ such that $(F-1)x\in p^rW\Omega^n_A$. Applying $1+\cdots+F^{r-1}$ we deduce that $(F^r-1)x\in p^rW\Omega^n_A$, whence $dV^ry=dx=p^rF^rdx-d(F^r-1)x\in p^rW\Omega^n_A$ and so $dy=F^rdV^ry\in p^rW\Omega^n_A$; therefore $y=F^ry'$ for some $y'\in W\Omega^n_A$ by Theorem \ref{theorem_eta}(ii).

We know now that $x=p^ry'+dV^rz$; applying $F^r-1$ shows that $(F^r-1)dV^rz=d(1-V^r)z$ is divisible by $p^r$, whence $(1-V^r)z\in F^rW\Omega^n_A$ by Theorem \ref{theorem_eta}(ii) again. But $1-V^r$ is an automorphism of $W\Omega^n_A$ with inverse $\sum_{i\ge0}V^{ri}$ (which commutes with $F$), whence $z=F^rz'$ for some $z'\in W\Omega^n_A$. In conclusion $x=p^ry'+p^rdz'$, which completes the proof of injectivity.
\end{proof}

In the case of smooth $\bb F_p$-algebras, the following is an important result due to Illusie \cite{Illusie1979}:

\begin{theorem}
Let $A$ be a Cartier smooth, local $\bb F_p$-algebra. Then the canonical map $W_{s}\Omega_{A,\sub{log}}^n/p^r\to W_r\Omega_{A,\sub{log}}^n$ is an isomorphism for any $s\ge r\ge1$.
\end{theorem}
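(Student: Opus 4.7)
The plan is to deduce the theorem from Lemma~\ref{lemma_dlog} together with the $p$-torsion-freeness of $W\Omega^n_A$ (Theorem~\ref{theorem_eta}(i)) via a snake lemma argument. Under these inputs, Lemma~\ref{lemma_dlog} identifies $W_k\Omega^n_{A,\sub{log}}$ with $\ker(F-1:W\Omega^n_A/p^k\to W\Omega^n_A/p^k)$ for every $k\ge1$, and identifies the cokernel of this map with that of $\res F-\pi:W_k\Omega^n_A\to W_k\Omega^n_A/dV^{k-1}\Omega^{n-1}_A$. The case $n=0$ of the theorem is trivial (both sides reduce to $\bb Z/p^r$), so we fix $n\ge 1$.

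Granting the surjectivity of $F-1$ on $W\Omega^n_A/p^k$ for every $k$ (see below), the rest of the proof is a direct snake chase. The short exact sequence
\[
0\To W\Omega^n_A/p^{s-r}\xTo{p^r}W\Omega^n_A/p^s\To W\Omega^n_A/p^r\To 0,
\]
exact by $p$-torsion-freeness of $W\Omega^n_A$, is $F-1$-equivariant, and the snake lemma together with the vanishing of cokernels yields an exact sequence
\[
0\To W_{s-r}\Omega^n_{A,\sub{log}}\xTo{\alpha} W_s\Omega^n_{A,\sub{log}}\xTo{\beta} W_r\Omega^n_{A,\sub{log}}\To 0
\]
in which $\beta$ is the canonical restriction. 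For any $y\in W_s\Omega^n_{A,\sub{log}}$ one has $\alpha(\bar y)=p^r y$, where $\bar y\in W_{s-r}\Omega^n_{A,\sub{log}}$ is the reduction of $y$, so $p^rW_s\Omega^n_{A,\sub{log}}\sseq\op{im}\alpha$. The reverse inclusion requires, given $x\in W_{s-r}\Omega^n_{A,\sub{log}}$ with lift $\tilde x\in W\Omega^n_A$, adjusting $\tilde x$ by a multiple of $p^{s-r}$ so that the result lies in $W_s\Omega^n_{A,\sub{log}}$ modulo $p^s$. This amounts to solving $(F-1)z\equiv -u\pmod{p^r}$ for the unique element $u\in W\Omega^n_A$ satisfying $p^{s-r}u=(F-1)\tilde x$ (well defined by $p$-torsion-freeness), and it is solvable exactly by the surjectivity assumption. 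Hence $\op{im}\alpha=p^rW_s\Omega^n_{A,\sub{log}}$, and the theorem follows.

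The main obstacle is therefore the surjectivity of $F-1$ on $W\Omega^n_A/p^k$, or equivalently of $\res F-\pi:W_k\Omega^n_A\to W_k\Omega^n_A/dV^{k-1}\Omega^{n-1}_A$, for each $k\ge 1$. This is Illusie's classical theorem \cite[Th.~I.5.7.2]{Illusie1979} in the smooth local case, and the task is to adapt it to the Cartier smooth local setting. My plan is to induct on $k$; the inductive step should follow from a d\'evissage through the $V$-filtration using the sequences from the proof of Proposition~\ref{proposition_WOmega_already_derived} together with the identification of the canonical and $p$-filtrations in Proposition~\ref{proposition_filtrations}(ii). The base case $k=1$---the surjectivity of $C^{-1}-1:\Omega^n_A\to\Omega^n_A/d\Omega^{n-1}_A$ for $n\ge 1$---is the genuinely new technical input, where Illusie's smooth-coordinate argument must be replaced by one using only the Cartier isomorphism~(Sm3) and the locality of $A$. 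I expect this base case to be the main difficulty.
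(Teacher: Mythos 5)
Your setup---the short exact sequence modulo powers of $p$, its exactness via Theorem~\ref{theorem_eta}(i), the identification via Lemma~\ref{lemma_dlog} of $W_k\Omega^n_{A,\sub{log}}$ with $\ker(F-1 : W\Omega^n_A/p^k\to W\Omega^n_A/p^k)$, and the long exact ``kernel--cokernel'' sequence---is exactly the paper's strategy. But you have introduced an unnecessary and substantial obstacle. You try to establish surjectivity of $\beta:W_s\Omega^n_{A,\sub{log}}\to W_r\Omega^n_{A,\sub{log}}$ by killing the cokernel terms, which would require surjectivity of $F-1$ on $W\Omega^n_A/p^k$, i.e.\ an analogue of Illusie's Theorem I.5.7.2 for Cartier smooth rings. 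You flag this as the ``genuinely new technical input,'' but it is not needed at all.

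The point you are missing is that the surjectivity of the canonical (restriction) map $W_s\Omega^n_{A,\sub{log}}\to W_r\Omega^n_{A,\sub{log}}$ is completely elementary and requires no smoothness hypothesis: by definition both groups are generated by forms $\dlog[\alpha_1]\wedge\cdots\wedge\dlog[\alpha_n]$ with $\alpha_i\in A^\times$, and the de~Rham--Witt restriction $R^{s-r}$ carries Teichm\"uller lifts to Teichm\"uller lifts and commutes with $d$ and products, hence sends the level-$s$ generators to the level-$r$ generators. This observation directly gives surjectivity of $\beta$ (so the exact sequence of kernels already terminates where you want it). It also replaces the second use of your surjectivity hypothesis: applying the same observation with $s-r$ in place of $r$, any $x\in W_{s-r}\Omega^n_{A,\sub{log}}$ lifts to some $y\in W_s\Omega^n_{A,\sub{log}}$, and then $\alpha(x)=p^r y\in p^rW_s\Omega^n_{A,\sub{log}}$; the reverse inclusion $p^rW_s\Omega^n_{A,\sub{log}}\subseteq\op{im}\alpha$ you already have. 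So $\ker\beta=\op{im}\alpha=p^rW_s\Omega^n_{A,\sub{log}}$ and the theorem follows. In short: what you correctly identified as the main difficulty is a statement you do not need, and the paper's proof avoids it entirely by exploiting the generator description of the $\dlog$ subgroups.
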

\begin{proof}
The sequence
\[0\To W\Omega^n_A/p^{s-r}\xTo{p^r} W\Omega^n_A/p^s\To W\Omega^n_A/p^r\To 0\]
is short exact since $W\Omega^n_A$ is $p$-torsion-free by Theorem \ref{theorem_eta}. Taking $F$-fixed points and appealing to Lemma \ref{lemma_dlog} gives an exact sequence \[0\To W_{s-r}\Omega^n_{A,\sub{log}}\xTo{p^r}W_{s}\Omega_{A,\sub{log}}^n\to W_r\Omega_{A,\sub{log}}^n.\] But $W_s\Omega^n_{A,\sub{log}}\to W_r\Omega^n_{A,\sub{log}}$ is surjective since both sides are generated by dlog forms, and the analogous surjectivity for $r$ replaced by $r-s$ shows that the image of the $p^r$ arrow in the previous line is $p^rW_s\Omega^n_{A,\sub{log}}$.
\end{proof}

\subsection{Proof of Theorem \ref{theorem_GL_val_general} via the trace map}
This subsection is devoted to the proof of Theorem \ref{theorem_GL_val_general}. Familiarity with topological cyclic homology is expected (we use the ``old'' approach to topological cyclic homology in terms of the fixed points spectra $\TR^r(-;p)=\THH(-)^{C_{p^{r-1}}}$, though the argument could be reformulated in terms of Nikolaus--Scholze's approach \cite{NikolausScholze2017} and the motivic filtrations of \cite{Bhatt_Morrow_Scholze3}.)

Given any smooth $\bb F_p$-algebra $R$, the homotopy groups of $\TR^r(R;p)$ have been calculated by Hesselholt \cite[Thm.~B]{Hesselholt1996}: \[\TR^r_n(R;p)\cong\bigoplus_{i\ge0} W_r\Omega_{R}^{n-2i}.\] These isomorphisms are natural in $R$ and compatible with $r$ in the sense that the restriction map $R^r:\TR^{2r}(R;p)\to \TR^r(R;p)$ induces zero on $W_{2r}\Omega_{R}^{n-2i}$, for $i>0$, and the usual de Rham--Witt restriction map $R^r:W_{2r}\Omega_{R}^{n}\to W_r\Omega_{R}^{n}$, for $i=0$.

Therefore left Kan extending\footnote{Note that $\TR^r(-;p):\bb F_p\op{-algs}\to\op{Sp}$ is left Kan extended from finitely generated free $\bb F_p$-algebras. Indeed, this is true for $\THH$ since tensor products and geometric realisations commute with sifted colimits, and then it is true by induction for $\TR^r(-;p)$ thanks to the isotropy separation sequences and the fact that homotopy orbits commute with all colimits. See also \cite[Corol.~6.8]{Bhatt_Morrow_Scholze3} for the construction of this filtration in the case of $\THH$.} shows that on any $\bb F_p$-algebra $A$ there is a natural descending $\bb N$-indexed filtration on $\TR^r(A;p)$ with graded pieces $\bigoplus_{i\ge0} \bb LW_r\Omega_{A}^{n-2i}$; moreover, the restriction map $R^r:\TR^{2r}(A;p)\to \TR^r(A;p)$ is compatible with this filtration and behaves analogously to the previous smooth case.

Now let $A$ be any Cartier smooth $\bb F_p$-algebra. Then $\bb LW_r\Omega_{A}^{n-2i}\simeq W_r\Omega_{A}^{n-2i}[0]$ by Proposition \ref{proposition_WOmega_already_derived} and so the previous paragraph implies that the description of $\TR_n^r(-;p)$ from the smooth case remains valid for $A$. In particular we obtain natural isomorphisms of pro abelian groups $\{\TR_n^r(A;p)\}_{r\sub{ wrt }R}\cong\{W_r\Omega^n_{A}\}_{r\sub{ wrt }R}$, whence taking the limit calculates the homotopy groups of $\TR(A;p):=\op{holim}_r\TR^r(A;p)$ as \[\TR_n(A;p)\cong W\Omega^n_A.\] Writing $\TR(A;\bb Z/p^r\bb Z)=\TR(A;p)/p^r$, we apply Theorem \ref{theorem_eta}(i) to obtain \[\TR(A;\bb Z/p^r\bb Z)\cong W\Omega^n_{A}/p^r.\]

The previous two isomorphisms are compatible with the Frobenius endormorphisms $F$ of $\TR(R;\bb Z_p)$ and $W\Omega_{A}$ (by left Kan extending the Frobenius compatibility of \cite[Thm.~B]{Hesselholt1996} from the smooth case), thereby proving that the homotopy groups of $\TC(A;\bb Z/p^r):=\op{hofib}(F-1:\TR(A;\bb Z/p^r\bb Z)\to \TR(A;\bb Z/p^r\bb Z))$ fit into a long exact homotopy sequence \[\cdots \To \TC_n(A;\bb Z/p^r\bb Z)\To W\Omega^n_{A}/p^r\xto{F-1} W\Omega^n_{A}/p^r\To\cdots\] In this long exact sequence we may replace each map $F-1$ by $\res F-\pi:W_r\Omega^n_{A}\to W_r\Omega^n_{A}/dV^{r-1}\Omega^n_{A}$, by Lemma \ref{lemma_dlog}, and so obtain a short exact sequence
\[0\To \op{Coker}(W_r\Omega^{n+1}_{A}\xto{\res F-\pi}W_r\Omega^{n+1}_{A}/dV^{r-1}\Omega^n_A)\To \TC_n(A;\bb Z/p^r\bb Z)\To W_r\Omega^n_{A,\sub{log}}\To 0\]
(here we use the equality $W_r\Omega^n_{A,\sub{log}}=\ker(\res F-\pi)$ from the start of \S\ref{ss_dlog}).

Now pick a free $\bb F_p$-algebra surjecting onto $A$ and let $R$ denote its Henselisation along the kernel; then $R$ is a local ind-smooth $\bb F_p$-algebra and so the previous short exact sequence is also valid for $R$ (since $R$ is also Cartier smooth). Putting together the short exact sequences for $A$ and $R$ yields a commutative diagram with exact rows, in which we have included the trace maps from algebraic $K$-theory:
\[\xymatrix@R=3mm@C=6mm{
K_n(R;\bb Z/p^r\bb Z)\ar@/^3mm/[drrr]^{\op{tr}}\ar[dd]&&&&\\
&0\ar[r] &\op{Coker}(W_r\Omega^{n+1}_{R}\xto{\res F-\pi}W_r\Omega^{n+1}_{R}/dV^{r-1}\Omega^n_R)\ar[dd] \ar[r] & \TC_n(R;\bb Z/p^r\bb Z)\ar[r]\ar[dd] & W_r\Omega^n_{R,\sub{log}}\ar[dd]\ar[r]&0\\
K_n(A;\bb Z/p^r\bb Z)\ar@/^3mm/[drrr]^{\op{tr}}&&&&\\
&0\ar[r] &\op{Coker}(W_r\Omega^{n+1}_{A}\xto{\res F-\pi}W_r\Omega^{n+1}_{A}/dV^{r-1}\Omega^n_A) \ar[r] & \TC_n(A;\bb Z/p^r\bb Z)\ar[r] & W_r\Omega^n_{A,\sub{log}}\ar[r]&0
}\]

We claim that the composition \[c_A:K_n(A;\bb Z/p^r\bb Z)\xto{\op{tr}}\TC_n(A;\bb Z/p^r\bb Z)\To W_r\Omega^n_{A,\sub{log}}\]  is an isomorphism. Since $R$ is ind-smooth and local, the analogous composition $c_R$ for $R$ in place of $A$ is an isomorphism by Geisser--Levine \cite{GeisserLevine2000} and Geisser--Hesselholt \cite{GeisserHesselholt1999}; see \cite[Thm.~2.48]{ClausenMathewMorrow} for a reminder of the argument in the case $r=1$. 

According to the main rigidity result of \cite{ClausenMathewMorrow}, the square of spectra formed by applying $\op{tr}:K(-;\bb Z/p^r\bb Z)\to \TC(-;\bb Z/p^r\bb Z)$ to $R\to A$ is homotopy cartesian, yielding a long exact sequence of homotopy groups
\[\cdots\To K_n(R;\bb Z/p^r\bb Z)\to K_n(A;\bb Z/p^r\bb Z)\oplus \TC_n(R;\bb Z/p^r\bb Z)\To \TC_n(A;\bb Z/p^r\bb Z)\To\cdots.\] But the trace map $\op{tr}:K_n(R;\bb Z/p^r\bb Z)\to \TC_n(R;\bb Z/p^r\bb Z)$ is injective (as $c_R$ is injective), whence this breaks into short exact sequences and proves that the trace map square in the above diagram is bicartesian.

Next, the vertical arrow between the cokernel terms is an isomorphism since $R\to A$ is Henselian along its kernel: see the proof of \cite[Prop.~6.12]{ClausenMathewMorrow}. Therefore the right-most square in the diagram is bicartesian. Concatanating the two bicartesian squares shows that
\[\xymatrix{
K_n(R;\bb Z/p^r\bb Z)\ar[r]^-{c_R}\ar[d] & W_r\Omega^n_{R,\sub{log}}\ar[d] \\ 
K_n(A;\bb Z/p^r\bb Z)\ar[r]^-{c_A}          & W_r\Omega^n_{A,\sub{log}}
}\]
is bicartesian. But we have already explained that $c_R$ is an isomorphism, whence $c_A$ is also an isomorphism. Finally, note that $c_A$ sends any symbol in $K_n(A;\bb Z/p^r\bb Z)$ to the associated dlog form in $W_r\Omega^n_{A,\sub{log}}$, by the trace map's multiplicativity  \cite[\S6]{GeisserHesselholt1999} and behaviour on units \cite[Lem.~4.2.3]{GeisserHesselholt1999}. The completes the proof of Theorem \ref{theorem_GL_val_general}. \qed

\section{$K$-theory of valuation rings}
\subsection{Gersten injectivity}\label{subsection_gersten}
Here we prove Theorem \ref{theorem_gersten_intro} from the introduction:

\begin{theorem}\label{proposition_gersten}
Let $\roi$ be a valuation ring containing a field, and $n\ge0$. Then $K_n(\roi)\to K_n(\Frac\roi)$ is injective.
\end{theorem}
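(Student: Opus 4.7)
The plan is to split the problem into a ``prime-to-residue-characteristic'' part, handled by alterations, and a ``residue-characteristic'' part, handled by Theorem~\ref{theorem_GL_intro}. Let $p$ denote the residue characteristic of $\roi$ (with the convention $p=0$ in equicharacteristic zero), set $F:=\Frac\roi$, and $M:=\ker(K_n(\roi)\to K_n(F))$. The aim is to establish both $M[\tfrac1p]=0$ and that $M$ is $p$-torsion-free; these combine to force $M=0$, since each element of $M$ would then be annihilated by a power of $p$ and simultaneously lie in a $p$-torsion-free subgroup. The $p$-torsion-freeness is immediate from Theorem~\ref{theorem_GL_intro} when $p>0$ (as $\roi\supseteq\bb F_p$, so $K_n(\roi)$ is itself $p$-torsion-free) and vacuous when $p=0$.

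For $M[\tfrac1p]=0$, I would write $\roi=\varinjlim A$ as a filtered colimit of its finite-type subrings over the prime field. Continuity of $K$-theory ensures that any element of $M$ descends to some $x\in K_n(A)$ whose image in $K_n(\Frac A)$ is zero (after possibly enlarging $A$). One then invokes alterations---de~Jong's in characteristic zero, or Gabber's refined prime-to-$\ell$ alterations (for each prime $\ell\neq p$ in turn) in characteristic $p$---to produce a proper surjection $\pi:Y\to\Spec A$ with $Y$ integral regular Noetherian, of generic degree $d$ prime to $p$ (with $d=1$ in characteristic zero, using resolution of singularities if one prefers). By Gersten's conjecture for regular Noetherian schemes containing a field (Quillen, Panin, Gillet--Levine), the map $K_n(Y)\hookrightarrow K_n(k(Y))$ is injective; and the image of $\pi^{*}x$ in $K_n(k(Y))$ vanishes, by functoriality through the zero map $K_n(\Frac A)\to K_n(k(Y))$. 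Hence $\pi^{*}x=0$ in $K_n(Y)$. Applying the transfer formula $\pi_{*}\pi^{*}=d\cdot\mathrm{id}$ then yields $d\cdot x=0$ in $K_n(A)$, so $x$ vanishes in $K_n(A)[\tfrac1p]$ and thus in $K_n(\roi)[\tfrac1p]$, as required.

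The main obstacle is making the alterations step fully rigorous: in particular, constructing the transfer $\pi_{*}$ and verifying the projection formula $\pi_{*}\pi^{*}=d$ for a $\pi$ which is only generically finite (not necessarily globally finite or flat). Two natural routes are (i) restricting to a dense open of $\Spec A$ on which $\pi$ is finite flat and controlling the contribution of the complement via localisation sequences and induction on dimension, or (ii) using rational/$\bb Z[\tfrac1p]$-coefficient $K$-theory together with projective pushforward of coherent sheaves and a Grothendieck--Riemann--Roch-type calculation of $\pi_{*}1$. Additional, relatively mild, care is needed in the oscillation between the non-Noetherian ring $\roi$ and its Noetherian approximants $A$, and in checking that Gersten injectivity holds globally on $Y$ (rather than merely Zariski-locally), which is standard for a regular Noetherian scheme of finite type over a field.
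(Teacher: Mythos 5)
Your overall strategy --- split into a $p$-torsion-free part (via Theorem~\ref{theorem_GL_intro}) and a prime-to-$p$ part (via alterations and Gersten) --- is exactly the paper's. But your implementation of the prime-to-$p$ step has two genuine gaps, and they are linked.

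First, the assertion that $K_n(Y)\to K_n(k(Y))$ is injective for a regular Noetherian scheme $Y$ of finite type over a field is false: already $K_0(\bb P^1_k)\cong\bb Z^2\to K_0(k(T))\cong\bb Z$ has a kernel. Gersten's conjecture (Quillen, and its extensions by Panin, Gillet--Levine, etc.) is a statement about regular \emph{local} (or semi-local) rings, not about global schemes; there is no ``globalisation'' to worry about because the global statement is simply wrong. Second, the transfer formula $\pi_*\pi^*=d\cdot\mathrm{id}$ is not available for a morphism $\pi\colon Y\to\Spec A$ that is merely proper and generically finite. The projection formula gives $\pi_*\pi^*\alpha=\alpha\cdot\pi_*[\roi_Y]$, and $\pi_*[\roi_Y]=[R\pi_*\roi_Y]$ differs from $d$ by classes supported on a proper closed subset; neither of your two proposed workarounds is carried out, and repairing this is non-trivial.

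The paper sidesteps both issues by going \emph{up} rather than down. It takes the integral closure $\tilde\roi$ of $\roi$ in $K(X)$ (a semi-local ring whose localisations at maximal ideals are valuation rings), uses the valuative criterion for properness to map each $\Spec\tilde\roi_{\frak m_i}$ into $X$, and then, via prime avoidance, finds a single affine open of $X$ containing all the resulting closed points; this yields a regular \emph{semi-local} $A$-subalgebra $B\subseteq\tilde\roi$ with $\Frac B=K(X)$, to which Quillen's Gersten theorem genuinely applies. This shows the class dies in $K_n(\tilde\roi)$, and the transfer is then taken not along $\pi$ but along a finite sub-$\roi$-algebra $\roi'\subseteq\tilde\roi$: since finite torsion-free $\roi$-modules are free, there is a bona fide transfer $K_n(\roi')\to K_n(\roi)$ with $\mathrm{tr}\circ\mathrm{res}=m$, $m=|\Frac\roi':F|$ prime to $\ell$. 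That is the step your proof is missing, and it is not a cosmetic rearrangement --- localising to the ``right'' semi-local ring (the one the valuation picks out) and using the transfer for the integral extension of valuation rings, rather than a pushforward along the alteration, is what makes the argument close.
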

\begin{proof}
Let $p\ge1$ be the characteristic exponent of $\Frac\roi$. It is enough to show that $K_n(\roi)[\tfrac1p]\to K_n(\Frac\roi)[\tfrac1p]$ is injective: indeed, if $p=1$ (i.e., $\Char\Frac\roi=0$) then this is exactly the desired assertion, while if $p>1$ then we may remove the $[\tfrac1p]$ since $K_n(\roi)$ has been shown to be $p$-torsion-free (Theorem \ref{theorem_GL_intro}).

By writing $F=\Frac \cal O$ as the union of its finitely generated subfields, and intersecting $\roi$ with each of these subfields, we may immediately reduce to the case that $F$ is finitely generated over its prime subfield $\bb F$. Fix a prime number $\ell\neq p$ and $\al\in\ker(K_n(\roi)\to K_n(F))$. Pick a finitely generated $\bb F$-subalgebra $A\subseteq \roi$ such that $\Frac A=F$ and such that $\al$ comes from a class $\al_A\in \ker(K_n(A)\to K_n(F))$. According to Gabber's refinement of de Jong's theory of alterations \cite[Thm.~2.1 of Exp.~X]{Gabber2014}, there exist a regular, connected scheme $X$ and a projective, generically finite morphism $X\to\Spec A$ such that $|K(X):F|$ is not divisible by $\ell$.

Let $\tilde{\roi}$ be the integral closure of $\roi$ in $K(X)$. The theory of valuation rings states that $\tilde\roi$ has only finitely many maximal ideal $\frak m_1,\dots,\frak m_d$ and that each localisation $\tilde\roi_{\frak m_i}$ is a valuation ring (indeed, these localisations classify the finitely many extensions of the valuation ring $\roi$ to $K(X)$ \cite[6.2.2]{GabberRamero2003}). By the valuative criterion for properness, each commutative diagram
\[\xymatrix{
\Spec K(X)\ar[d]\ar[rr] && X\ar[d]\\
\Spec \tilde\roi_{\frak m_i}\ar[r]\ar@{-->}[rru]^{\exists\rho_i}&\Spec \tilde\roi\ar[r]&\Spec A
}\]
may be filled in by a dashed arrow as indicated. Let $x_i\in X$ be the image of the maximal ideal of $\tilde{\roi}_{\frak m_i}$ under $\rho_i$.

Since $X$ is quasi-projective over $\bb F$, the homogeneous prime avoidance lemma implies the existence of an affine open of $X$ containing the finitely many points $x_1,\dots,x_d$. This is given by a regular sub-$A$-algebra $B\subseteq K(X)$ contained inside $\bigcap_i\tilde\roi_{\frak m_i}=\tilde\roi$ and such that $\Frac B=K(X)$. After replacing $B$ by its semi-localisation at the radical ideal $\bigcap_{i}\frak m_i\cap B$, we may suppose in addition that $B$ is semi-local. The situation may be summarised by the following diagram:
\[\xymatrix{
\Spec K(X)\ar[d]\ar[r] &\Spec B\ar[r]& X\ar[d]\\
\Spec \tilde\roi_{\frak m_i}\ar[r]&\Spec \tilde\roi\ar[r]\ar[u]&\Spec A
}\]

The element $\al_A$ vanishes in $K_n(K(X))$, hence also in $K_n(B)$ since Gersten's conjecture states that $K_n(B)\to K_n(K(X))$ is injective \cite[Thm.~7.5.11]{Quillen1973a} (note that $B$ is a regular, semi-local ring, essentially of finite type over a field). Therefore $\al$ vanishes in $K_n(\tilde\roi)$. Since $\tilde\roi$ is an integral extension of $\roi$, it is the filtered union of its $\roi$-algebras which are finite $\roi$-modules; let $\roi'\subseteq \tilde\roi$ be such a sub-$\roi$-algebra such that $\al$ vanishes in $K_n(\roi')$. Since finite torsion-free $\roi$-modules are finite free, there is a trace map $K_n(\roi')\to K_n(\roi)$ such that the composition $K_n(\roi)\to K_n(\roi')\to K_n(\roi)$ is multiplication by $m:=|\Frac\roi':F|$, which is not divisible by $\ell$. In conclusion, we have shown that $m\al=0$ for some integer $m$ not divisible by $\ell$. 

In other words, $K_n(\roi)_{(\ell)}\to K_n(F)_{(\ell)}$ is injective for all primes $\ell\neq p$, which impies the injectivity of $K_n(\roi)[\tfrac1p]\to K_n(F)[\tfrac1p]$ and so completes the proof.\qedhere
\end{proof}

\begin{remark}
The following streamlined proof of Theorem \ref{proposition_gersten} has been pointed out to us by Antieau. In case $\roi$ contains $\bb Q$, then resolution of singularities implies that $\roi$ is a filtered colimit of essentially smooth local $\bb Q$-algebras $A$, whence the desired injectivity immediately reduces to the Gersten conjecture for each $A$. When $\roi$ contains $\bb F_p$, we combine the $p$-torsion-freeness of  Theorem \ref{theorem_GL_intro} with the isomorphism of Remark \ref{remark_perf} to reduce to the case that $\roi$ is perfect; but then $\roi$ is a filtered colimit of essentially smooth local $\bb F_p$-algebras by Temkin's inseparable local uniformisation theorem \cite[Thm.~1.3.2]{Temkin2013}, which as in characteristic $0$ reduces the problem to the usual Gersten conjecture.
\end{remark}

\subsection{Homotopy invariance}
Now we prove Theorem \ref{theorem_K_of_val_intro} from the introduction, entirely using classical methods; this is independent of Section \ref{subsection_gersten}. We remark that parts of the following theorem have also recently been noticed by Antieau and Mathew.

\begin{theorem}\label{theorem_K_of_val}
Let $\roi$ be a valuation ring. Then
\begin{enumerate}
\item $K_n(\roi)\to K_n(\roi[T_1,\dots,T_d])$ is an isomorphism for all $d\ge1$ and $n\in\bb Z$;
\item $K_n(\roi)\to \KH_n(\roi)$ is an isomorphism for all $n\in\bb Z$;
\item $K_n(\roi)=0$ for all $n<0$.
\end{enumerate}
\end{theorem}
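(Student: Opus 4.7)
The plan is to reduce all three assertions to two homological properties of $\roi$, after which the standard proofs from the regular Noetherian case apply with no essential modification. First, one checks that $\roi$ is \emph{stably coherent}, meaning $\roi[T_1,\dots,T_d]$ is coherent for every $d\ge 0$; this is a classical result in the theory of commutative coherent rings (Glaz, following Soublin). Second, one checks that $\roi[T_1,\dots,T_d]$ has bounded weak global dimension: since every finitely generated ideal of a valuation ring is principal and hence free, $\roi$ itself has weak dimension $\le 1$, and each polynomial variable adds at most one to the weak dimension.

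These two properties combine to ensure that every finitely presented module over $\roi[T_1,\dots,T_d]$ admits a finite resolution by finitely generated projectives. Consequently, Quillen's resolution theorem identifies $K(\roi[T_1,\dots,T_d])$ with the $K$-theory of the abelian category of finitely presented modules ($G$-theory), and the standard dévissage and filtration-by-$T$-torsion arguments that prove the fundamental theorem for regular Noetherian rings (Quillen, Bass, Grayson) go through verbatim. This yields part (i) in non-negative degrees.

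Part (ii) follows formally: by construction, $\KH$ is obtained from $K$ by universally forcing homotopy invariance (Weibel's $\NS$-construction), so part (i), applied to $\roi$ and all of its polynomial extensions, already implies $K(\roi) \simeq \KH(\roi)$. Part (iii) follows from the Bass fundamental exact sequence, which in the presence of (i) applied to $\roi[T]$, $\roi[T^{-1}]$, and $\roi[T, T^{-1}]$ forces the negative $K$-groups to vanish by induction. The Bass sequence itself is available at this level of generality because the non-connective $K$-theory of stably coherent rings of bounded Tor-dimension satisfies the same localisation properties as in the regular Noetherian case.

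I do not anticipate a genuinely hard step; the substantive content is already contained in the classical coherence and weak-dimension theorems for valuation rings, and the remainder is a careful invocation of the Quillen--Bass--Weibel machinery as it appears in the literature. The observation, as the introduction indicates, is essentially that stable coherence plus bounded Tor-dimension — not Noetherianity or regularity — is what the classical proofs actually require.
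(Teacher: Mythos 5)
Your proposal is correct and follows the same overall strategy as the paper: reduce parts (i)--(iii) to two homological facts about $\roi[T_1,\dots,T_d]$ — stable coherence, plus a finiteness condition guaranteeing finitely presented modules have finite projective dimension — and then invoke the Quillen--Bass--Weibel machinery exactly as in the regular Noetherian case (the paper cites Gersten, Weibel, and Antieau--Gepner--Heller for this). Where you genuinely diverge is in how you establish the second homological property. The paper first reduces by filtered colimits to the case where $\roi$ is countable, then appeals to Osofsky's theorem (a countably generated valuation ring has global dimension $\le 2$), and finally uses Hilbert's syzygy theorem to propagate finite \emph{global} dimension to polynomial rings — while explicitly noting that finite global dimension is stronger than what (b) actually needs. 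You instead observe that every valuation ring has \emph{weak} dimension $\le 1$ outright (torsion-free $\Rightarrow$ flat, and submodules of flat modules over a domain are torsion-free), that weak dimension rises by at most one per polynomial variable, and that over a coherent ring a finitely presented module of finite flat dimension automatically has finite projective dimension (the $w$-th syzygy is finitely presented and flat, hence projective). This buys you a shorter route: no countability reduction, no Osofsky, and it makes transparent why the condition is intrinsic to valuation rings rather than a consequence of a cardinality accident. The coherence half is identical in substance — the paper derives it from Raynaud--Gruson and cites Glaz~7.3.3 as the textbook reference, which is the same source you invoke.
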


\comment{
\begin{remark}
Let $\roi$ be a valuation ring and set $F:=\Frac \roi$. Recall that the associated value group is the totally ordered abelian group $\Gamma:=F^\times/\roi^\times$ (written multiplicatively), ordered by the rule $f\le g\Leftrightarrow fg^{-1}\in\roi$. An subgroup $H\subseteq\Gamma$ is called {\em convex} (or sometimes {\em isolated}) if and only if, whenever $\al\le\beta\le\gamma$ are elements of $\Gamma$ such that $\al,\gamma\in H$ then also $\beta\in H$; the convex subgroups of $\Gamma$ are easily seen to be totally ordered by inclusion, and the {\em rank} of $\roi$ is defined to be the number of proper convex subgroups of $\Gamma$. In fact, the convex subgroups of $\Gamma$ are in one-to-one correspondence with the prime ideals of $\roi$ (which are also totally ordered by inclusion); this shows that the rank of $\roi$ is the same as its Krull dimension.
\end{remark}
}

\begin{proof}
Since assertions (i)--(iii) are compatible with filtered colimits we may assume that $\Frac\roi$ is finitely generated over its prime subfield, and in particular that $\roi$ is countable.

The usual proofs of the assertions (i)---(iii), in the case of a regular Noetherian ring, continue to work for $\roi$ once it is known, for all $d\ge0$, that
\begin{enumerate}[(a)]
\item $\roi[T_1,\dots,T_d]$ is coherent, and
\item every finitely presented $\roi[T_1,\dots,T_d]$-module has finite projective dimension.
\end{enumerate}
Indeed, see \cite[Eg.~1.4]{Weibel1989a} and \cite{Gersten1974} for a discussion of Quillen's fundamental theorem and applications in this degree of generality. See also \cite[Thm.~3.33]{AntieauGepnerHeller2016} for a proof of the vanishing of negative $K$-groups of regular, stably coherent rings.

It remains to verify that $\roi$ has properties (a) and (b). For (a), the coherence of polynomial algebras over valuation rings is relatively well-known: any finitely generated ideal $I$ of $\roi[T_1,\dots,T_d]$ is a flat $\roi$-module (since torsion-free modules over a valuation ring are flat), hence is of finite presentation over $\roi[T_1,\dots,T_d]$ by Raynaud--Gruson \cite[Thm.~3.6.4]{RaynaudGruson1971}. A textbook reference is \cite[Thm.~7.3.3]{Glaz1989}.

Next we check (b). A classical result of Osofsky \cite[Thm.~A]{Osofsky1967} \cite{Osofsky1974} states that a valuation ring $\roi$ has finite global dimension $n+1$ if and only if $\aleph_n$ is the smallest cardinal such that every ideal of $\roi$ can be generated by $\aleph_n$ elements. In particular, $\roi$ has finite global dimension $\le 2$ if and only if every ideal can be generated by at most countably many elements (i.e., $\roi$ is {\em $\aleph_0$-Noetherian}); but this is manifestly true in our case since we reduced to the case in which $\roi$ is itself a countable set. In conclusion $\roi$ has finite global dimension, which is inherited by any polynomial algebra over it by Hilbert \cite[Thm.~4.3.7]{Weibel1994}, and this is stronger than required in (b).
\end{proof}

\begin{remark}\label{remark_KH}
Let $X$ be a Noetherian scheme of finite Krull dimension. A conservative family of points for the cdh site $X_\sub{cdh}$ is given by the spectra of Henselian valuation rings \cite{GabberKelly2015}, and hence Theorem \ref{theorem_K_of_val}(iii) implies that $\cal K_n^\sub{cdh}=0$ for $n<0$, where $\cal K_n^\sub{cdh}$ denotes the sheafification of the abelian presheaf $\cal K_n(-)$ on $X_\sub{cdh}$. In fact, this follows from the earlier result of Goodwillie--Lichtenbaum \cite{GoodwillieLichtenbaum2001} that a conservative family of points for the rh site is given by the spectra of valuation rings.

Since $X_\sub{cdh}$ is known to have cohomological dimension $=\dim X$ \cite{SuslinVoevodsky2000}, we immediately deduce from the resulting descent spectral sequence that $K_n^\sub{cdh}(X)=0$ for $n<-\dim X$, where $K^\sub{cdh}(X):=\bb H(X_\sub{cdh},K)$ refers to the cdh sheafification of the $K$-theory presheaf of spectra. Moreover, combining Theorem \ref{theorem_K_of_val}(ii) with Cisinski's result \cite{Cisinski2013} (proved around 2010) that $\KH$ satisfies cdh descent implies that $K^\sub{cdh}(X)\simeq\KH(X)$, whence one immediately gets the following consequences:
\begin{enumerate}[(a)]
\item $\KH_n(X)=0$ for $n<-\dim X$; this was first proved in 2016 by Kerz--Strunk \cite{KerzStrunk2017} via their usage of Raynaud--Gruson flattening methods. (It is important to note that the proof of Theorem \ref{theorem_K_of_val} crucially uses Raynaud--Gruson to check that valuation rings are stably coherent.)
\item If $p$ is nilpotent in $X$ then $K_n(X)[\tfrac1p]=0$ for $n<-\dim X$ (since the nilpotence of $p$ implies $\KH(X)[\tfrac1p]=K(X)[\tfrac1p]$ and then we can apply (a); alternatively, once can avoid Cisinski's result by appealing instead to Theorem \ref{theorem_cdh_descent} to see that $K[\tfrac1p]$ satisfies cdh descent); this was first proved in 2011 by the first author's use of alterations and $\ell$dh topologies \cite{Kelly2014}.
\end{enumerate}
\end{remark}

\subsection{A surjectivity result}
Here we establish a surjectivity result (Corollary \ref{corollary_surj}) which was not stated in the introduction but which plays a role in the first author's new approach to comparing cdh and $\ell$dh topologies \cite[Thm.~1]{Kelly2018}.

\begin{lemma}\label{lemma_axiomatic_surj}
Let $k$ be a perfect field, $\roi\supseteq k$ a valuation ring, and $\frak p\subseteq\roi$ an ideal along which $\roi$ is Henselian.
\begin{enumerate}
\item If $F:k\op{-algs}\to\op{Sets}$ is a functor which commutes with filtered colimits and $\frak p$ is the maximal ideal of $\roi$, then $F(\roi)\to F(\roi/\frak p)$ is surjective.
\item If $F:k\op{-algs}\to\op{Spectra}$ is a functor which commutes with filtered colimits, is nil-invariant, and satisfies excision,\footnote{By ``excision'' we mean that whenever $A\to B$ is a homomorphism of $k$-algebras and $I\subseteq A$ is an ideal which is sent isomorphically to an ideal of $B$, then the induced map of relative theories $\op{hofib}(F(A)\to F(A/I))\to\op{hofib}(F(B)\to F(B/I))$ is an equivalence.} then $\pi_nF(\roi)\to \pi_nF(\roi/\frak p)$ is surjective for all $n\in\bb Z$.
\end{enumerate}
\end{lemma}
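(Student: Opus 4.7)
The common strategy for both parts is to use that $F$ commutes with filtered colimits to reduce to lifting from a finitely generated $k$-subalgebra of $\roi/\frak p$, enlarge it to an essentially smooth $k$-subalgebra using perfectness of $k$, and finally invoke the classical Henselian lifting: if $B$ is smooth over $k$ and $(\roi,\frak p)$ is a Henselian pair of $k$-algebras, then every $k$-algebra map $B\to\roi/\frak p$ lifts to $B\to\roi$ (apply the smooth-section-lifting property of Henselian pairs, in the sense of Elkik/Gabber, to the smooth $\roi$-algebra $B\otimes_k\roi$).

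For (i), given $\alpha\in F(\roi/\frak m)$, I would write the field $\roi/\frak m$ as the filtered colimit of its finitely generated $k$-subalgebras $B_i$, so that $\alpha$ comes from $\alpha_i\in F(B_i)$ for some $i$. Since $B_i$ is a finitely generated $k$-algebra domain and $k$ is perfect, generic smoothness provides a nonzero $f\in B_i$ with $B:=B_i[f^{-1}]$ smooth over $k$; since $\roi/\frak m$ is a field, $f$ is automatically a unit there, so $B\subseteq\roi/\frak m$. The Henselian lifting property produces $B\to\roi$ lifting the inclusion $B\hookrightarrow\roi/\frak m$, and applying $F$ yields the required preimage of $\alpha$.

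Part (ii) is substantially harder because for $\frak p$ non-maximal the quotient $\roi/\frak p$ is only a valuation ring, so an $f\in B_i$ whose inversion smooths $B_i$ may fail to be a unit in $\roi/\frak p$, and the smooth enlargement $B_i[f^{-1}]$ need not lie in $\roi/\frak p$. My plan is to reduce to $\alpha_0\in\pi_nF(B)$ for a finitely generated $k$-subalgebra $B\subseteq\roi/\frak p$, lift generators of $B$ arbitrarily to $\roi$ to obtain $\tilde B\subseteq\roi$ with surjection $\tilde B\twoheadrightarrow B$ of kernel $J:=\tilde B\cap\frak p$, and then use the spectrum-valued hypotheses to finish: the excision hypothesis in its Milnor-square form turns the Milnor square $\tilde B\times_{\roi/\frak p}\roi$ into a homotopy pullback under $F$, and an essentially smooth enlargement of $\tilde B$ inside $\roi$ (now permissible, since one may freely invert $\roi$-units) together with an appeal to (i) on this smooth side should produce the desired preimage of $\alpha$ in $\pi_nF(\roi)$ through the Milnor pullback.

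The main obstacle is the interaction between $\tilde B$ and its quotient $B$: the ideal $J$ is prime, so nil-invariance does not directly force $F(\tilde B)\simeq F(B)$, and $J\to J\roi$ is not an isomorphism, so the excision hypothesis cannot be applied directly to the map $\tilde B\to\roi$. Overcoming this will require passing through the Henselisation of $\tilde B$ along $J$---controlled by the fact that $J\roi\subseteq\frak p$, and hence by the Henselian structure of $(\roi,\frak p)$---and carefully combining excision and nil-invariance; this interplay is the technical heart of (ii).
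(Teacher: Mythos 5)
Your proof of part (i) is correct but takes a genuinely different route from the paper's. The paper picks a finitely generated $k$-subalgebra $A\subseteq\roi$, localises at $\frak q:=\frak m\cap A$, lifts via a Cohen coefficient field to the completion $\hat A_{\frak q}$, and then descends by Artin approximation to the Henselisation $A_{\frak q}^h$, which maps to $\roi$ by the universal property of Henselisation. You instead cut out an essentially smooth $k$-subalgebra $B$ of the residue field via generic smoothness (using that $k$ is perfect and $B_i$ is a domain) and invoke the smooth-section-lifting characterisation of Henselian pairs applied to $B\otimes_k\roi$. Both arguments are valid; yours avoids Cohen's theorem and Artin approximation and is arguably more streamlined, at the cost of relying on the Elkik--Gabber lifting criterion.

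Part (ii), however, has a genuine gap, essentially where you flag the difficulty yourself. The Milnor square you describe does not exist as stated: $\tilde B$ is strictly contained in the fibre product $B\times_{\roi/\frak p}\roi$ (the latter equals $\tilde B+\frak p$), so $F(\tilde B)$ is not a homotopy pullback of the other three terms. Even after replacing $\tilde B$ by its Henselisation along $J$, the ideal $J$ is not carried isomorphically onto an ideal of $\roi$, so the excision hypothesis never applies to $\tilde B\to\roi$; and since $J$ is prime rather than nilpotent, nil-invariance does not compare $F(\tilde B)$ with $F(B)$. The missing idea is the paper's reduction to the maximal-ideal case by \emph{localising at $\frak p$}. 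After replacing $\frak p$ by its radical (harmless by nil-invariance), $\frak p$ is prime, and the localisation $\roi_\frak p$ is again a valuation ring whose maximal ideal equals $\frak p$ itself --- the identity $\frak p\roi_\frak p=\frak p$ being a special feature of valuation rings. Thus $\frak p$ is a common ideal of $\roi$ and $\roi_\frak p$ and excision applies to the data $\frak p\subseteq\roi\subseteq\roi_\frak p$, giving a Mayer--Vietoris sequence relating $F(\roi)$, $F(\roi/\frak p)$, $F(\roi_\frak p)$ and $F(\roi_\frak p/\frak p)$. By Gabber's theorem that Henselianness along an ideal depends only on the ideal as a non-unital ring, $\roi_\frak p$ is Henselian along $\frak p$, so part (i) gives surjectivity of $\pi_nF(\roi_\frak p)\to\pi_nF(\roi_\frak p/\frak p)$; the Mayer--Vietoris sequence then splits into short exact sequences and yields the desired surjectivity for $\roi$. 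Without this localisation step there is no apparent way to close the argument you sketch.
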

\begin{proof}
(i). Assume $\frak p=\frak m$ is the maximal ideal of $\roi$, and let $\al\in F(\roi/\frak m)$. Since $\roi/\frak m$ is the filtered union of $A/(\frak m\cap A)$, as $A$ runs over all finitely generated $k$-subalgebras of $\roi$, we pick such an $A$ such that $\al$ comes from some class $\al_{A/\frak m\cap A}\in F(A/(\frak m\cap A))$. Let $\frak q:=\frak m\cap A$ and let $A_\frak q^h$ be the Henselisation of $A_\frak q\subseteq\roi$ at $\frak q A_\frak q$; note that the inclusion $A\into\roi$ factors through $A_\frak q^h$, by functoriality of Henselisation. 

The completion $\hat{A_\frak q}$ admits a coefficient field containing $k$ by Cohen \cite[Thm.~60]{Matsumura1980}, whence the image of $\al_{A/\frak q}$ in $F(A_\frak q/\frak qA_\frak q)$ may be lifted to $K_n(\hat A_{\frak q})$; by Artin approximation (or N\'eron--Popescu desingularision) it may therefore be lifted to $\al_{A_\frak q^h}\in F(A_\frak q^h)$ (note here that $A_\frak q$ is essentially of finite type over a field, in particular it is excellent). The image of $\al_{A_\frak q^h}$ in $F(\roi)$ is of course the desired lift of $\al$.

(ii): Replacing $\frak p$ by its radical (using invariance of $F$ for locally nilpotent ideals), we may assume that $\frak p$ is a prime ideal. Then standard theory of valuation rings states that the localisation $\roi_\frak p$ is a valuation ring with maximal ideal $\frak p$. Thus we may apply excision to the data $\frak p\subseteq \roi\subseteq\roi_\frak p$ and so obtain a long exact Mayer--Vietoris sequence
\[ \dots \To \pi_nF(\OO) \To\pi_nF(\OO/\p) \oplus \pi_nF(\OO_\p) \To \pi_nF(\roi_\frak p/\frak p) \To \dots. \]
But $\roi_\frak p$ is Henselian along $\frak p$ (here we use the fact the Henselianness along an ideal depends only the ideal as a non-commutative ring, not on the ambient ring \cite[Corol.~1]{Gabber1992}), so part (i) implies that $\pi F_n(\roi_\frak p)\to \pi_nF(\roi_\frak p/\frak p)$ is surjective for all $n\in\bb Z$. Therefore the long exact sequence breaks into short exact sequences and moreover each map $\pi_nF(\roi)\to\pi_nF(\roi/\frak p)$ is surjective.
\end{proof}

\begin{corollary}\label{corollary_surj}
Let $\roi$ be a valuation ring containing a field, and $\frak p\subseteq\roi$ a prime ideal along which $\roi$ is Henselian. Then $K_n(\roi)\to K_n(\roi/\frak p)$ is surjective for all $n\in\bb Z$.
\end{corollary}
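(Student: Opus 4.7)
The strategy is to apply Lemma \ref{lemma_axiomatic_surj}(ii) to the homotopy $K$-theory functor $F = \KH$, with $k$ taken to be the prime subfield of $\roi$ (so $k = \bb Q$ or $\bb F_p$, which is perfect), and then to transfer the resulting surjectivity to $K$-theory using Theorem \ref{theorem_K_of_val}(ii). Note that no reduction via nil-invariance is required at the outset, since $\frak p$ is already assumed prime.

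Verifying the axioms for $F = \KH$: the functor $\KH$ commutes with filtered colimits (inherited from $K$-theory); it is nil-invariant by its very construction as the realisation of the simplicial ring $R[\Delta^\bullet]$; and it satisfies Milnor-style excision in the sense of the lemma's footnote. The last property is the most delicate, but it is classical for homotopy $K$-theory, and in any case it can be deduced from Tamme's recent excision theorem \cite{Tamme2017}, since the only situation in which the lemma's proof invokes excision is the Milnor square $(\roi, \roi_\p, \frak p)$—and here $\roi_\p$ is a localisation of $\roi$, hence flat, so the derived correction term in Tamme's formula vanishes.

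Lemma \ref{lemma_axiomatic_surj}(ii) then yields $\KH_n(\roi) \twoheadrightarrow \KH_n(\roi/\frak p)$ for every $n \in \bb Z$. To convert this to a statement about $K$-theory, I use the standard fact that a quotient of a valuation ring by a prime ideal is again a valuation ring, so both $\roi$ and $\roi/\frak p$ are valuation rings. Theorem \ref{theorem_K_of_val}(ii) then identifies $K_n(\roi) = \KH_n(\roi)$ and $K_n(\roi/\frak p) = \KH_n(\roi/\frak p)$, and the desired surjectivity follows at once.

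The main obstacle is confirming Milnor excision for $\KH$ in the generality required by the lemma; once this is granted, the remainder is purely formal. An alternative, which avoids invoking excision for $\KH$ in general and appeals directly to the structural results of this paper, would be to run the argument of Lemma \ref{lemma_axiomatic_surj}(ii) by hand for the specific Milnor square $(\roi, \roi_\p, \frak p)$, exploiting the fact that all four rings appearing (namely $\roi, \roi_\p, \roi/\frak p, \Frac(\roi/\frak p) = \roi_\p/\frak p$) are valuation rings and hence satisfy $K \simeq \KH$ by Theorem \ref{theorem_K_of_val}(ii).
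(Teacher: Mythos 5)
Your proof is correct and follows the paper's own argument essentially verbatim: apply Lemma \ref{lemma_axiomatic_surj}(ii) to $F=\KH$ (whose filtered-colimit compatibility, nil-invariance, and Milnor excision are all due to Weibel \cite{Weibel1989a}), then transfer the resulting surjectivity to $K$-theory via Theorem \ref{theorem_K_of_val}(ii), using the standard fact that $\roi/\frak p$ is again a valuation ring. The extra verification of $\KH$'s axioms and the discussion of Tamme's theorem are unnecessary elaborations, but they do not change the underlying argument.
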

\begin{proof}
We apply Lemma \ref{lemma_axiomatic_surj}(ii) to the functor $\KH$, which satisfies all the hypotheses by Weibel \cite{Weibel1989a}, and then use Theorem \ref{theorem_K_of_val}(ii) to identify $K$ and $\KH$ of valuation rings.
\end{proof}

\section{cdh descent for perfect schemes}
In this section we give a new proof that $K[\tfrac1p]$ satisfies cdh descent in characteristic $p$. In the following lemma the notation $A_\sub{perf}:=\indlim_\phi A$ denotes the colimit perfection of any $\bb F_p$-algebra $A$:

\begin{lemma}\label{lemma_radicial_maps}
Let $A$ be an $\bb F_p$-algebra. Then the canonical map $K(A)[\tfrac1p]\to K(A_\sub{perf})[\tfrac1p]$ is an equivalence.
\end{lemma}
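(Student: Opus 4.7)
The plan is to present $A_\sub{perf}$ as the filtered colimit of the Frobenius diagram
\[A \xto{\phi} A \xto{\phi} A \To \cdots,\]
and reduce the lemma to showing that Frobenius induces an equivalence on $K(A)[\tfrac{1}{p}]$. Since algebraic $K$-theory commutes with filtered colimits of rings and $[\tfrac{1}{p}]$ commutes with all colimits of spectra,
\[K(A_\sub{perf})[\tfrac{1}{p}] \simeq \indlim \bigl(K(A)[\tfrac{1}{p}] \xto{\phi_*} K(A)[\tfrac{1}{p}] \To \cdots\bigr).\]
If $\phi_*$ is an equivalence on $K(A)[\tfrac{1}{p}]$, then every transition map in the diagram is an equivalence, the sequential colimit is identified with its first vertex, and the canonical map $K(A)[\tfrac{1}{p}] \to K(A_\sub{perf})[\tfrac{1}{p}]$ of the lemma is itself an equivalence.

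For the remaining claim that $\phi_*$ is an equivalence after inverting $p$, I would argue that the morphism $\phi \colon \Spec A \to \Spec A$ is an integral universal homeomorphism of $\bb F_p$-schemes---indeed, $\phi^{-1}(\frak p) = \frak p$ for every prime $\frak p \subseteq A$, so $\phi$ is literally the identity on underlying topological spaces, and each $a \in A$ is a $p$-th root of $\phi(a) = a^p \in \phi(A)$---and then invoke the standard invariance of $K[\tfrac{1}{p}]$ under integral universal homeomorphisms in characteristic $p$. The geometric content of this invariance is the classical transfer--pullback identity: after reducing to the $F$-finite case by writing $A$ as the filtered colimit of its finitely generated $\bb F_p$-subalgebras (each of which is $F$-finite since $\bb F_p$ is perfect), Frobenius becomes a finite morphism whose generic degree is a power of $p$, and the composites of transfer and pullback are multiplication by this degree, hence invertible on $K[\tfrac{1}{p}]$.

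The main obstacle is the precise implementation of the transfer--pullback argument in $K$-theory (rather than $G$-theory) for singular $F$-finite $\bb F_p$-algebras. By Kunz's theorem, Frobenius is flat precisely on regular rings, so in the non-regular case there is no naive pullback on the category of perfect complexes. One expedient is to route everything through $G$-theory, where pushforward along any finite map is defined unconditionally, and then to compare $K$ and $G$ via resolution arguments; another is to cite the now-established $p$-inseparable (or integral universal homeomorphism) invariance of $K[\tfrac{1}{p}]$ for $\bb F_p$-schemes as a black box. Either route hides the real technical content of the lemma.
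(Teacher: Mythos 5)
You correctly reduce the lemma to the claim that Frobenius $\phi_* \colon K(A)[\tfrac1p] \to K(A)[\tfrac1p]$ is an equivalence, but you then run squarely into the flatness obstruction and, by your own admission, do not resolve it: routing through $G$-theory fails because pullback along Frobenius on $G$-theory also requires flatness (and there is no $K\simeq G$ comparison for singular rings), and the ``black box'' invariance of $K[\tfrac1p]$ under universal homeomorphisms is essentially the statement to be proved, so citing it is circular for the purposes of this lemma. What is missing is the device that lets the transfer--pullback calculus run with genuinely finite \emph{flat} maps.

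The paper achieves this by first replacing $A$ by $A_\sub{red}$ (using nil-invariance of $K[\tfrac1p]$) so that $\phi$ is injective, and then reducing, via a filtered colimit, to showing that an extension $A \subseteq B = A[b]$ with $b^p = a \in A$ induces an equivalence on $K[\tfrac1p]$. The point of this reformulation is that the surjection $A[X]/(X^p - a) \onto B$ has locally nilpotent kernel, so by nil-invariance one may replace $B$ by $A[X]/(X^p - a)$, which \emph{is} finite free of rank $p$ over $A$. One then forms the pushout square with $A[Y]/(Y^p - a)$ and $B[Y]/(Y^p - a)$: all four maps in this square are finite flat, the vertical map $f' \colon B \to B[Y]/(Y^p - a)$ is split by $Y \mapsto X$ with nilpotent kernel (hence $f'^*$ is an equivalence on $K[\tfrac1p]$), $f'_* f'^* = p$ makes $f'_*$ an equivalence too, and the symmetric statement holds for $g'$. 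Base change $g^* f_* = f'_* g'^*$ then forces $g^*$ and $f_*$ to be equivalences. Your proposal stops precisely where this trick would need to enter: you never pass from the Frobenius (not flat) to the auxiliary finite free algebra $A[X]/(X^p - a)$, which is what makes the transfer--pullback argument legitimate on $K$-theory of a possibly singular ring.
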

\begin{proof}
Since $A_\sub{perf}$ is unchanged if we replace $A$ by $A_\sub{red}$, and $A\to A_\sub{red}$ induces an equivalence on $K[\tfrac1p]$ (since $K[\tfrac1p]$ is invariant under locally nilpotent ideals in characteristic $p$), we may replace $A$ by $A_\sub{red}$ and so assume that the Frobenius $\phi$ is injective on $A$. It is then clearly enough to show that $\phi(A)\to A$ induces an isomorphism on $K[\tfrac1p]$. Changing notation and taking a filtered colimit, this reduces to the following claim:
\begin{quote}
Let $A\subseteq B$ be an extension of rings such that $B=A[b]$ for some $b\in A$ such that $b^p\in A$. Then $K(A)[\tfrac1p]\to K(B)[\tfrac1p]$ is an equivalence.
\end{quote}

Let $A\subseteq B$ be as in the claim and set $a:=b^p\in A$. The map $A[X]/X^p-a\to B$, $X\mapsto b$ is surjective and has nilpotent kernel (any element $f$ in the kernel satisfies $f^p\in A$, but $A\to B$ is injective), hence induces an equivalence on $K[\tfrac1p]$ by nil-invariance of $K[\tfrac1p]$ in characteristic $p$. Thus we have reduced to the special case $B=A[X]/X^p-a$.

To treat the special case we consider the base change square 
\[\xymatrix@=1.5cm{
A[Y]/Y^p-a\ar@{^(->}[r]^{g'}&B[Y]/Y^p-a\\
A\ar@{^(->}[u]^f\ar@{^(->}[r]_g& B\ar@{^(->}[u]_{f'}
}\]
in which all maps are finite flat. The map $f'$ is split by the $B$-algebra homomorphism $B[Y]/Y^p-a\to B$, $Y\mapsto X$, whose kernel is nilpotent (similarly to the previous paragraph); therefore, again using nil-invariance of $K[\tfrac1p]$, we have shown that $f'^*$ is an equivalence on $K[\tfrac1p]$. But $f'_*f'^*=p$, so $f'_*$ is also an equivalence on $K[\tfrac1p]$. The same argument, swapping the roles of $X$ and $Y$, shows that $g'^*$ and $g'_*$ are equivalences on $K[\tfrac1p]$.

Base change for algebraic $K$-theory tells us that $g^*f_*=f'_*g'^*:K(A[Y]/Y^p-a)\to K(B)$, whence we deduce that $g^*f_*$ is an equivalence on $K[\tfrac1p]$. But $f_*$ is surjective on homotopy groups since $f_*f^*=p$, whence it follows that both $g^*$ and $f_*$ are equivalences on $K[\tfrac1p]$.
\end{proof}

\begin{remark}\label{remark_perf}
The higher $K$-groups $K_n$, $n\ge1$, of any perfect $\bb F_p$-algebra are known to be uniquely $p$-divisible \cite[Corol.~5.5]{Kratzer1980}. Therefore the previous lemma implies that $K_n(A)[\tfrac1p]\cong K_n(A_\sub{perf})$ for $n\ge1$.
\end{remark}

The previous lemma may be combined with results of Bhatt--Scholze and Tamme to prove a  cdh descent result for $K[\tfrac1p]$ in characteristic $p$ (which is not really new; see Remark \ref{remark_pro_cdh}), namely Theorem~\ref{theorem_cdhp_intro} from the introduction:

\begin{theorem}\label{theorem_cdh_descent}
Let
\[\xymatrix{
Y'\ar[d]\ar[r]^i & X'\ar[d] \\
Y\ar[r] & X
}\]
be an abstract blow-up square of schemes on which $p$ is nilpotent; assume that $X$ is quasi-compact quasi-separated and that the morphisms $X'\to X$ and $Y\into X$ are of finite presentation. Then the resulting square 
\[\xymatrix{
K(X)[\tfrac1p]\ar[d]\ar[r] & K(X')[\tfrac1p]\ar[d] \\
K(Y)[\tfrac1p]\ar[r] & K(Y')[\tfrac1p]
}\]
is homotopy cartesian.
\end{theorem}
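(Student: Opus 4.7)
My plan follows the strategy sketched in the introduction: reduce to a square of perfect $\bb F_p$-algebras, and then invoke Tamme's excision criterion of \cite{Tamme2017}, with the required input supplied by Bhatt--Scholze's results on perfect schemes \cite{BhattScholze2017}.

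First comes the reduction. Since $K[\tfrac{1}{p}]$ satisfies Zariski descent (inherited from $K$), and an abstract blow-up square of finite presentation restricts to an abstract blow-up square on each affine open of $X$, a Mayer--Vietoris argument lets me assume $X=\Spec A$ is affine, $X'=\Spec A'$, $Y=\Spec A/I$ for some finitely generated ideal $I\subseteq A$, and $Y'=\Spec A'/IA'$. Because $p$ is nilpotent in $A$ and $K[\tfrac{1}{p}]$ is invariant under locally nilpotent ideals in characteristic $p$, I may replace $A$ by $A_\sub{red}$ and assume $A$ is an $\bb F_p$-algebra. Now Lemma~\ref{lemma_radicial_maps}, applied to each corner of the square, lets me replace $A$, $A'$, $A/I$, $A'/IA'$ by their respective perfections without changing $K[\tfrac{1}{p}]$; hence it suffices to treat the case where all four rings are perfect $\bb F_p$-algebras.

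Next I apply Tamme's theorem, which states that for a suitable commutative square of rings the failure of the associated $K$-theory square to be homotopy cartesian is measured by the discrepancy between the underived pushout $A'/IA'$ and the derived pushout $A' \otimes^{\bb L}_{A} A/I$. The goal then reduces to establishing the vanishing $\Tor_i^{A_\sub{perf}}(A'_\sub{perf}, (A/I)_\sub{perf})=0$ for $i>0$, together with identification of $\Tor_0$ with $(A'/IA')_\sub{perf}$. This Tor-vanishing is the main obstacle; it rests on the homological properties of perfect $\bb F_p$-schemes developed by Bhatt--Scholze, essentially the principle that in the perfect setting classical and derived tensor products along reduced closed subschemes coincide, so that perfection "absorbs" the derived corrections that normally measure the failure of a blow-up square to be cartesian at the level of rings. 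Granted this input, Tamme's theorem delivers the homotopy cartesianness of the $K[\tfrac{1}{p}]$-square.
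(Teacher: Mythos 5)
The opening moves match the paper's proof: reduce to $\bb F_p$-schemes by nil-invariance, pass to perfections using Lemma~\ref{lemma_radicial_maps}, and then invoke Tamme's excision criterion. But there are two genuine problems with how you try to apply Tamme from that point.

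First, your Zariski-descent reduction to the affine case cannot give you a square of \emph{rings}. If $X=\Spec A$ is affine, the modified scheme $X'$ in an abstract blow-up square is proper over $X$ and in general \emph{not} affine (e.g.\ the blow-up of $\mathbb A^2$ at the origin), so the step ``$X'=\Spec A'$'' is simply false. You cannot model the whole square as a commutative square of $\bb F_p$-algebras.

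Second, and relatedly, the version of Tamme's criterion you invoke---that the obstruction to $K$-theory cartesianness is measured by $\Tor_i^{A}(A',A/I)$, $i>0$, together with $\Tor_0\cong A'/IA'$---is the formulation adapted to affine (Milnor-type) squares of rings, and it does not apply to a blow-up square. Even had the affine reduction worked, this Tor condition only corresponds to one of the two inputs Tamme requires. The paper instead applies the stable-$\infty$-categorical version of Tamme's theorem directly to the square of $D_\sub{qc}$'s of the perfect schemes, and checks two separate conditions: (a) that the square of $\infty$-categories $D_\sub{qc}(X_\sub{perf})\to D_\sub{qc}(X'_\sub{perf})\times_{D_\sub{qc}(Y'_\sub{perf})}D_\sub{qc}(Y_\sub{perf})$ is a pullback, which is \cite[Corol.~5.28]{BhattScholze2017} and is the substantive geometric input; and (b) that $Ri_*:D_\sub{qc}(Y'_\sub{perf})\to D_\sub{qc}(X'_\sub{perf})$ is fully faithful, which is the part that reduces to a tensor-product identity (that $S/I\otimes^{\bb L}_S S/I\simeq S/I$ for a quotient of perfect rings, \cite[Lem.~3.16]{BhattScholze2017}). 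Your proposal effectively only gestures at input (b) and misses (a) entirely. To repair the argument you should drop the affine reduction, keep the schemes, and cite both Bhatt--Scholze statements as in the paper.
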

\begin{proof}
Using once again nil-invariance of $K[\tfrac1p]$ we may apply $-\otimes_\bb Z\bb Z/p\bb Z$ to the diagram and so reduce to the case of $\bb F_p$-schemes. By Lemma \ref{lemma_radicial_maps} it is then enough to prove the stronger statement that
\[\xymatrix{
K(X_\sub{perf})\ar[d]\ar[r] & K(X'_\sub{perf})\ar[d] \\
K(Y_\sub{perf})\ar[r] & K(Y'_\sub{perf})
}\]
is homotopy cartesian. To do this we appeal to Tamme's excision condition \cite[Thm.~18]{Tamme2017}, which states that it is enough to check that the square of stable $\infty$-categories
\[\xymatrix{
D_\sub{qc}(X_\sub{perf})\ar[d]\ar[r] & D_\sub{qc}(X'_\sub{perf})\ar[d] \\
D_\sub{qc}(Y_\sub{perf})\ar[r] & D_\sub{qc}(Y'_\sub{perf})
}\]
has the following two properties:
\begin{enumerate}[(a)]
\item it is a pull-back of $\infty$-categories;
\item the right adjoint $Ri_*:D_\sub{qc}(Y'_\sub{perf})\to D_\sub{qc}(X'_\sub{perf})$ is fully faithful.
\end{enumerate}
Both these properties about perfect schemes are due to Bhatt--Scholze. Firstly, the pull-back condition (a) is  \cite[Corol.~5.28]{BhattScholze2017}. Secondly, condition (b) is a consequence of $Li^*Ri_*=\op{id}$, which follows from the fact that if $S\to S/I$ is a surjection of perfect $\bb F_p$-algebras, then $S/I\dotimes_S S/I\quis S/I$ \cite[Lem.~3.16]{BhattScholze2017}.
\end{proof}

\begin{remark}\label{remark_pro_cdh}
We review some previous proofs of cdh descent properties of $K$-theory.
\begin{enumerate}
\item Firstly, if $X$ were assumed to be of finite type over a perfect field of characteristic exponent $p\ge1$ admitting resolution of singularities in a strong sense, then Haesemeyer's argument \cite{Haesemeyer2004} would show that $\KH$ satisfies cdh descent, thereby giving the homotopy cartesian square for $K[\tfrac1p]$ replaced by $\KH$; but then one can invert $p$ and use that $\KH[\tfrac1p]=K[\tfrac1p]$ when $p$ is nilpotent.

Alternatively, under the same assumptions on $X$ and the base field, trace methods were used by the second author \cite{Morrow_pro_cdh_descent} to establish pro cdh descent, namely that the square of pro spectra 
\[\xymatrix{
\{K(Y'_r)\}_r\ar[d]\ar[r] & K(X')\ar[d] \\
\{K(Y_r)\}_r\ar[r] & K(X)
}\]
is homotopy cartesian, where $Y_r$ is the $r^\sub{th}$-infinitesimal thickening of $Y$ in $X$ (and similarly for $Y'$). Since $K[\tfrac1p]$ is invariant under nilpotent ideals when $p$ is nilpotent, inverting $p$ then yields the same homotopy cartesian square of Theorem \ref{theorem_cdhp_intro}.

\item Alternatively, assuming only that $X$ is Noetherian and of finite Krull dimension, Cisinski showed that $\KH$ satisfies cdh descent \cite{Cisinski2013}, again giving the desired homotopy cartesian square for $\KH$ rather than $K[\tfrac1p]$.
\item Kerz--Strunk--Tamme have proved pro cdh descent \cite{KerzStrunkTamme2016} for Noetherian schemes without any hypotheses on resolution of singularities. As in (i), inverting $p$ then gives the homotopy cartesian square of Theorem \ref{theorem_cdhp_intro} assuming that $X$ is Noetherian.

 \item Most recently, Land--Tamme have substantially clarified the theory of cdh descent for arbitrary ``truncating invariants'' \cite{LandTamme2018}.
\end{enumerate}
\end{remark}

\bibliographystyle{acm}
\bibliography{../Bibliography}

\noindent Shane Kelly \hfill Matthew Morrow \\
Department of Mathematics, \hfill CNRS \& IMJ-PRG,\\
Tokyo Institute of Technology, \hfill  SU -- 4 place Jussieu,\\
2-12-1 Ookayama, Meguro-ku, \hfill Case 247,\\
Tokyo 152-8551, Japan \hfill 75252 Paris\\
{\tt shanekelly@math.titech.ac.jp}\hfill {\tt matthew.morrow@imj-prg.fr}

\end{document}